\numberwithin{equation}{section}
\newtheorem{theorem}{Theorem}[section]
\newtheorem{proposition}[theorem]{Proposition}
\newtheorem{lemma}[theorem]{Lemma}
\newtheorem{corollary}[theorem]{Corollary}
\def\a{\alpha}
\def\b{\beta}
\def\vp{\varphi}
\newcommand{\N}{\mathbb{N}}
\newcommand{\R}{\mathbb{R}}
\newcommand{\C}{\mathbb{C}}
\newcommand{\Rd}{\mathbb{R}^d}
\newcommand{\bbG}{\mathbb{G}}
\newcommand{\bbH}{\mathbb{H}}
\newcommand{\bbM}{\mathbb{M}}
\newcommand*\calF{\mathcal{F}}
\newcommand*\calG{\mathcal{G}}
\newcommand*\calH{\mathcal{H}}
\newcommand{\D}{{\rm Dom}}
\newcommand{\calka}{\int_{\R^{d'}}}
\begin{document}
\title[A transform for the Grushin operator]{A transform for the Grushin operator with applications}

\author[K. Stempak]{Krzysztof Stempak}
\address{Krzysztof Stempak \endgraf\vskip -0.1cm
         55-093 Kie\l{}cz\'ow, Poland \endgraf \vskip -0.1cm
				        }
\email{Krzysztof.Stempak@pwr.edu.pl}

\begin{abstract} 
In the setting of the Grushin differential operator $G=-\Delta_{x'}-|x'|^2\Delta_{x''}$ with domain 
${\rm Dom}\,G=C^\infty_c(\mathbb{R}^d)\subset L^2(\mathbb{R}^d)$, 
we define a scalar transform which is a mixture of the partial Fourier transform and a transform based on the scaled Hermite functions. 
This transform unitarily intertwines $G$ with a multiplication operator by a nonnegative real-valued function on an appropriately associated 
`dual' space $L^2(\Gamma)$. This allows to construct a self-adjoint extension $\mathbb G$ of $G$ as a simple realization of this multiplication 
operator. Another self-adjoint extensions of $G$ are defined in terms of  sesquilinear forms and then these extensions are compared. Aditionally, 
a closed formula for the heat kernel that corresponds to the heat semigroup $\{\exp(-t\mathbb G)\}_{t>0}$ is established.
\end{abstract}
\subjclass[2020]{Primary 35H20, 35K08; Secondary 47B25.}



\keywords{Grushin operator, self-adjoint extension, heat kernel.} 

\maketitle

\section{Introduction} \label{sec:intro} 

The \textit{Grushin differential operator} $G$ on $\R^d=\R^{d'}\times\R^{d''}$, $d', d''\ge1$, is given by the differential expression
$$
-\Delta_{x'}-|x'|^2\Delta_{x''},
$$ 
where $x'\in\R^{d'}$, $x''\in\R^{d''}$,  and $\Delta_{x'},\Delta_{x''}$ are the Laplacians on $\R^{d'}$ and $\R^{d''}$, respectively. 
It is an important example of second order subelliptic differential operator with polynomial coefficients, which was intensively investigated 
in the PDE's theory and also from the point of view of harmonic analysis. There is an interesting interplay between $G$ and differential operators 
on some Lie groups, notably between $G$ and the sublaplacians on the Heisenberg groups or, more generally, on stratified Lie groups, see Dziuba\'nski 
and Sikora \cite{DzS}, and Martini and Sikora \cite{MS}. 

It is easily seen that initially considered with domain $\D\,G=C^\infty_c(\R^d)\subset L^2(\Rd)$, $G$ is symmetric and nonnegative. Moreover, 
$G$ is essentially self-adjoint (see \cite{MS} for a group-theoretic argument or \cite{DaM} for a simple proof of this), hence it admits the 
unique self-adjoint extension on $L^2(\Rd)$ called the \textit{Grushin operator} and denoted $\bbG$. During the last years numerous papers were 
devoted to different aspects of harmonic analysis of $\bbG$. See, for instance, Dall'Ara and Martini \cite{DaM} with extensive literature 
therein, and Jotsaroop et al. \cite{JST}, where, among others, the spectral decomposition of the Grushin operator was proposed (in the case $d''=1$). 

In this paper we construct and study a transform, denoted $\calG$, with integral kernels being tensor products of eigenfunctions of scaled Hermite operators 
$-\Delta_{x'}+|\xi''|^2|x'|^2$ on $L^2(\R^{d'})$, $0\neq\xi''\in\R^{d''}$, and eigenfunctions of the Laplacian $-\Delta_{x''}$ on $L^2(\R^{d''})$. 
These tensor products are eigenfunctions of $G$. The eigenfunctions of scaled Hermite operators consist of systems of scaled Hermite functions that 
form orthonormal bases in $L^2(\R^{d'})$. The use of these systems  was initiated and successfully applied by Thangavelu in the harmonic analysis 
of the sub-Laplacian on the Heisenberg group, \cite{T1}, \cite{T2}, and in an investigation of an analogue of Hardy's theorem for the Heisenberg 
group, \cite{T3}, \cite{T4}, and also in some other contexts. The idea of this transform was implicit in many works devoted to the Grushin operator, 
see e.g. \cite{MS}, but did not receive an explicit description.

The transform $\calG$ plays the same role for $G$ as the Fourier transform plays for the Laplacian. In Theorem~\ref{thm:first} we prove that $\calG$ 
is a unitary bijection between $L^2(\Rd)$ and an appropriately associated `dual' space $L^2(\Gamma)$. Moreover, $\calG$ unitarily intertwines $G$ 
with the self-adjoint multiplication operator $\bbM_\Theta$ on $L^2(\Gamma)$, where $\Theta$ is a real-valued positive function on $\Gamma$, 
see \eqref{3.2} and Section~\ref{sec:self}. This allows to construct 
a simple self-adjoint realization of $G$, denoted $\bbG$, just by mapping $\bbM_\Theta$ onto $L^2(\Rd)$; see Proposition~\ref{pro:zero}. 
Then  $\bbG$ is recognized as an operator with domain being a Sobolev-type space adapted to $G$ and the action  $\bbG f=Gf$, where $Gf$ 
is understood in the distributional sense, see Proposition~\ref{pro:Sob2}. This is much the same as in the case of the Laplacian on $L^2(\Rd)$.

Consequently, the functional calculus for $\bbG$ reflected from that for $\bbM_\Theta$ allows to establish a closed 
formula of the heat kernel $\{p_t\}_{t>0}$ corresponding to the heat semigroup $\{\exp(-t\bbG)\}_{t>0}$; this is done in Theorem~\ref{thm:heat}. 
Closed formulas of $p_t$ were known, see Section 5.2 in the monograph \cite{CCFI}, where the case $d'=d''=1$ was treated, Garofalo and Tralli 
\cite[Theorem 3.4]{GT}, or Oliveira and Viana \cite[Proposition 2.1]{OV}. It is also known, at least for $d''=1$, that the heat kernel for the 
Grushin operator is the image, under the unitary Schr\"odinger representation of the Heisenberg group $\bbH_{d'}=\R^{d'}\times\R^{d'}\times\R$, 
of the heat kernel for the sublaplacian on $\bbH_{d'}$, cf. \cite{DJ}. We mention that in \cite{GT} and \cite {OV},  
in the framework of the differential operator $G$ and the associated heat equation  $\partial_tu=-G_xu$, the heat kernel is meant as
a function $p(x,\xi,t)$ such that for any $\xi\in\Rd$ the function $p(\cdot,\xi,\cdot)$ is a solution of the heat equation, and, for every $x\in\Rd$, 
$p(x,\cdot,t)\to \delta_x$ in the distributional sense as $t\to0^+$. Our approach is operator theoretic and the heat kernel is understood as the 
collection of integral kernels of $\exp(-t\bbG)$, which are bounded operators on $L^2(\Rd)$. We add that calculations leading to \eqref{5.2}, the 
closed form of $p_t$, are straightforward and, we believe, are worth presentation. 

The structure of the paper is as follows. Section~\ref{sec:prel} contains preliminaries. In Section~\ref{sec:Gru} we define the $\calG$-transform 
and the inverse transform $\calG^{-1}$, and prove Plancherel's identities and the inversion formulas for them, which are the main tools used in the next sections. 
Section~\ref{sec:self} is devoted to establishing self-adjoint extensions of $G$ based either on the $\calG$-transform or on a sesquilinear form, 
and showing that they coincide. In Theorem~\ref{thm:heat} of Section~\ref{sec:heat} a compact formula of the heat kernel is established and basic 
properties of it are examined. Finally, in Section~\ref{sec:app} we gather proofs of some technical results used earlier; we believe that putting 
this stuff in the appendix will allow the reader to concentrate on the main line of thoughts. 

\textbf{Notation and terminology}. 
In general the symbols $'$ and $''$ (prime and double prime) used as superscripts will indicate that the related objects correspond to the $\R^{d'}$ 
or $\R^{d''}$ settings, respectively. The symbol $|\cdot|$ will stand for the Euclidean norm  in  $\R^{d'}$ or $\R^{d''}$, depending on the context,  
and  $\langle\cdot ,\cdot\rangle$ will denote the usual inner product in $L^2(\R^{d})$. 
We use the calligraphic letters $\calF$, $\calG$, $\calH$, possibly with subscripts and/or superscripts, to denote transforms. For instance $\calF''$ 
stands for the Fourier transform on $\R^{d''}$,  
$$
\calF'' f(\xi'')=(2\pi)^{-d''/2}\int_{\R^{d''}}f(x'')e^{-\textrm{i}\xi''\cdot x''}\,dx'', \qquad f\in L^1(\R^{d''}),
$$
and $\calF''$ also means the usual extension of this transform onto $L^2(\R^{d''})$ being an isometric bijection. Then $\calF''^{-1}$ stands for the 
inverse Fourier transform. If not otherwise stated $\calF''$ will always be understood as the unitary automorphism of $L^2(\R^{d''})$.
Given a function $f$ 
on $\R^{d'}\times\R^{d''}$ we shall write $f_{x'}:=f(x',\cdot)$ and $f_{x''}:=f(\cdot,x'')$ to denote the sections of $f$ with respect to the `first' 
variable $x'\in \R^{d'}$, or the `second' variable $x''\in \R^{d''}$. Throughout, the decomposition $\Rd=\R^{d'}\times\R^{d''}$ is fixed and $x'$ and 
$x''$ will always be taken from the associated decomposition $x=(x',x'')$. 
\section{Preliminaries} \label{sec:prel}  
The system of multidimensional Hermite functions $\{h_k\}_{k\in \N^{d'}}$ is an orthonormal basis in $L^2(\R^{d'})$ consisting of eigenfunctions of 
the Hermite differential operator, 
$$
(-\Delta_{x'}+|x'|^2)h_k=(2|k|+d')h_k, \qquad |k|:=k_1+\ldots+k_{d'};
$$
see \cite{T0} or \cite{ST} for details (exceptionally, $|\cdot|$ stands here also for the \textit{length}  of $k$). The \textit{scaled systems} 
$\{h_{k,\tau}\}_{k\in \N^{d'}}$,  $\tau>0$,
$$
h_{k,\tau}(x')=\tau^{d'/4}h_k(\tau^{1/2}x'), \qquad x'\in \R^{d'},
$$
form orthonormal bases in $L^2(\R^{d'})$ and consist of eigenfunctions of the \textit{scaled Hermite differential operator}, 
$$
(-\Delta_{x'}+\tau^2|x'|^2)h_{k,\tau}=\lambda_k'\tau h_{k,\tau},\qquad \lambda_k'=2|k|+d'.
$$
Now define the \textit{scaled Hermite transform} which for every $\xi''\in\R^{d''}_0:=\R^{d''}\setminus\{0\}$ attaches to a function $f\in L^2(\R^{d'})$ 
the sequence of coefficients from the expansion of $f$ with respect to $\{h_{k,|\xi''|}\}_{k\in \N^{d'}}$. Namely, for $f\in L^2(\R^{d'})$, 
$$
\calH^{\textsl{sc}}_{\xi''} f(k):=\calH^{\textsl{sc}} f(k,\xi'') =\langle f,h_{k,|\xi''|}  \rangle, \qquad k\in\N^{d'},\,\,\,\xi''\in\R^{d''}_0.
$$
By Parseval's identity, for every $\xi''\in\R^{d''}_0$, $\calH^{\textsl{sc}}_{\xi''}\colon L^2(\R^{d'})\to \ell^2(\N^{d'})$ is a unitary isomorphism, 
and its inverse is the mapping 
$$
\calH^{\textsl{sc};-1}_{\xi''}\colon \ell^2(\N^{d'}) \to  L^2(\R^{d'}),\qquad \{c_k\}_{k\in \N^{d'}}\to\sum_{k\in \N^{d'}}c_k h_{k,|\xi''|}.
$$

The appropriate scaling of the Hermite functions is crucial in verification that the functions
$$  
\psi_{k,\xi''}(x):=(2\pi)^{-d''/2}h_{k,|\xi''|}(x')e^{-\textrm{i} x''\cdot\xi''}, \qquad k\in\N^{d'},\quad \xi''\in\R^{d''}_0,
$$  
are eigenfunctions of the Grushin differential operator. (The normalization constant $(2\pi)^{-d''/2}$ is, of course, immaterial in this place, 
but it is decisive later on to keep some transforms $L^2$ isometric.)
\begin{lemma} \label{lem:second}
For $k\in\N^{d'}$ and $\xi''\in\R^{d''}_0$, 
$$
G\psi_{k,\xi''} = \lambda_{k}'|\xi''| \, \psi_{k,\xi''}.
$$
\end{lemma}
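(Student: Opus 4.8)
The plan is to exploit the fact that $\psi_{k,\xi''}$ factors as a tensor product, $\psi_{k,\xi''}(x) = u(x')\,v(x'')$, where $u = h_{k,|\xi''|}$ depends only on $x'$ and $v(x'') = (2\pi)^{-d''/2} e^{-\textrm{i} x''\cdot\xi''}$ depends only on $x''$. Since the two parts of the Grushin expression act on disjoint sets of variables, I would compute $G\psi_{k,\xi''}$ by letting $-\Delta_{x'}$ act on the first factor and $\Delta_{x''}$ on the second, keeping the multiplier $|x'|^2$ in front of the second term. This reduces the problem to two one-factor computations that can be treated independently.

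First I would handle the $x''$-factor. The plane wave $e^{-\textrm{i} x''\cdot\xi''}$ is an eigenfunction of $\Delta_{x''}$: each second-order derivative in the $j$-th variable brings down a factor $(-\textrm{i}\xi''_j)^2 = -(\xi''_j)^2$, so summing over $j$ gives $\Delta_{x''} v = -|\xi''|^2\, v$. Substituting this into the Grushin expression turns the term $-|x'|^2\Delta_{x''}\psi_{k,\xi''}$ into $+|\xi''|^2|x'|^2\, u(x')\,v(x'')$. Collecting terms, the action on the $x'$-factor is governed precisely by the operator $-\Delta_{x'} + |\xi''|^2|x'|^2$, that is, the scaled Hermite operator with scaling parameter $\tau = |\xi''|$. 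I would then invoke the scaled Hermite eigenvalue identity stated in the preliminaries, namely $(-\Delta_{x'} + \tau^2|x'|^2)h_{k,\tau} = \lambda_k'\tau\, h_{k,\tau}$ with $\tau = |\xi''|$, to conclude that $(-\Delta_{x'} + |\xi''|^2|x'|^2)u = \lambda_k'|\xi''|\,u$. Multiplying back by $v$ yields $G\psi_{k,\xi''} = \lambda_k'|\xi''|\,\psi_{k,\xi''}$.

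There is no genuine analytic obstacle here; the argument is a short separation-of-variables computation. The only point that requires care, and indeed the whole reason the construction works, is the matching of scales: the plane-wave factor converts $-\Delta_{x''}$ into multiplication by $-|\xi''|^2$, and it is exactly the choice of scaling $\tau = |\xi''|$ in the Hermite functions $h_{k,|\xi''|}$ that makes the resulting potential $|\xi''|^2|x'|^2$ align with the quadratic term $\tau^2|x'|^2$ of the scaled Hermite operator. This is the sense in which the scaling is, as noted before the statement, \emph{crucial}.
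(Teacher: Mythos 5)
Your proposal is correct and follows essentially the same route as the paper: the paper's proof is precisely this separation-of-variables computation, converting $-|x'|^2\Delta_{x''}$ into $+|\xi''|^2|x'|^2$ via the plane wave and then invoking the scaled Hermite eigenvalue identity $(-\Delta_{x'}+|\xi''|^2|x'|^2)h_{k,|\xi''|}=\lambda_k'|\xi''|\,h_{k,|\xi''|}$. No gaps.
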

\begin{proof} 
The following elementary checking is included for completness. We have
\begin{align*}
(2\pi)^{d''/2}G\,\psi_{k,\xi''}(x)
&=(-\Delta_{x'}-|x'|^2\Delta_{x''})\big( h_{k,|\xi''|}(x')e^{-\textrm{i}x''\cdot\xi''}\big) \\
&=-\big(\Delta_{x'}h_{k,|\xi''|}\big)(x')e^{-\textrm{i}x''\cdot\xi''} - |x'|^2 h_{k,|\xi''|}(x')\Delta_{x''}e^{-\textrm{i}x''\cdot\xi''}\\
&=(-\Delta_{x'}+|\xi''|^2|x'|^2)h_{k,|\xi''|}(x') e^{-\textrm{i}x''\cdot\xi''}  \\ 
&=(2\pi)^{d''/2}\lambda_{k}'|\xi''|\, \psi_{k,\xi''}(x).
\end{align*}
\end{proof}

We shall need $L^1$ and $L^\infty$ estimates of the Hermite functions $h_k$ which follow from pointwise uniform estimates proved in the one-dimensional 
setting by Askey and Wainger, and enhanced by Muckenhoupt \cite[(2.3)]{Mu}; see also \cite[Section 1.5]{T0} or \cite[Section 2]{ST} for additional comments. Namely, with $|k|\to\infty$,
\begin{equation} \label{fiu}
\|h_k\|_{L^1(\R^{d'})}=O(|k|^{d'/4}), \qquad \|h_k\|_{L^\infty(\R^{d'})}=O(1).
\end{equation}
\section{$\calG$-transform and its inverse} \label{sec:Gru} 

In this section we define a transform, in the present context playing the role of the Fourier transform in the analysis of the Laplacian, which 
will be crucial for the spectral analysis of the Grushin operator. Using the scaled Hermite transform in this definition is motivated by \cite{JST}, 
where the system of scaled Hermite functions was applied to describe the spectral decomposition of the Grushin operator.

For a suitable function $f$ on $\R^d$ we define its transform $\calG f$, a function on $\N^{d'}\times\R^{d''}_0$, given as the integral transform based 
on the eigenfunctions $\psi_{k,\xi''}$  of $G$. Namely, we set
\begin{equation} \label{3.1}
\calG f(k,\xi'')=\int_{\Rd} f(x)\psi_{k,\xi''}(x)\,dx, \qquad k\in\N^{d'},\quad \xi''\in\R^{d''}_0,
\end{equation}
provided $f$ is such that the integral converges for every $k\in\N^{d'}$ and almost every $\xi\in\R^{d''}_0$. Obviously, this happens for 
$f\in C^\infty_c\big(\Rd\big)$. Notice, that using a version of Green's formula adapted to $G$, see Lemma~\ref{lem:int}, 
for $\vp\in C^\infty_c\big(\Rd)$ we obtain
\begin{equation}\label{3.2}
\calG (G \vp)(k,\xi'')=\lambda_{k}'|\xi''|\, \calG \vp(k,\xi''), \qquad k\in\N^{d'},\quad  \xi''\in\R^{d''}_0.
\end{equation}
Thus, $\calG$ intertwines the differential operator $G$ and the multiplication operator by $(k,\xi'')\to \lambda_{k}'|\xi''|$. It will follow from 
Theorem~\ref{thm:first} that with appropriately (and naturally) defined $L^2$ space, $\calG$ becomes a unitary isomorphism. 

The $\calG$-transform is for $\vp\in C^\infty_c\big(\Rd\big)$ expressible through the Fourier transform $\calF''$ and the scaled Hermite  
transform $\calH^{\textsl{sc}}$. Recall that  $\calF''$ is a unitary automorphism of $L^2(\R^{d''})$ and $\calH^{\textsl{sc}}$  
acts on $L^2(\R^{d'})$ attaching to $g\in L^2(\R^{d'})$ the sequences $\{\langle g,h_{k,|\xi''|}\rangle\}_{k\in\N^{d'}}$, $\xi''\in \R^{d''}_0$. Namely,
\begin{equation*}
\calG \vp(k,\xi'')=\int_{\R^{d'}} h_{k,|\xi''|}(x') \calF'' \vp_{x'}(\xi'')\,dx'=\calH^{\textsl{sc}}_{\xi''}\big[\calF'' \vp_{x'}(\xi'')\big](k).
\end{equation*}
The expression inside the square brackets is, for fixed $\xi''$, a function of $x'$ to which $\calH^{\textsl{sc}}_{\xi''}$ is applied. 
Similarly to the above case, in several places below, square brackets will indicate that expressions inside them should be properly identified. 
This means, that one has to recognize a variable among some other characters; from the context it will be clear which characters appear as fixed parameters.

This preliminary calculation leads to the extension of the action of $\calG$ onto $L^2(\Rd)$ by setting
\begin{equation}\label{3.3}
\calG f(k,\xi''):=\calH^{\textsl{sc}}_{\xi''}\big[\calF''f_{x'}(\xi'')\big](k), 
\qquad f\in L^2(\Rd).
\end{equation}
It remains to confirm correctness of the above definition. But
\begin{align*}
\int_{\R^{d''}}\calka | \calF'' f_{x'}(\xi'')|^2dx'\,d\xi''=\calka\int_{\R^{d''}}|\calF'' f_{x'}(\xi'')|^2d\xi''\,dx'&=
\calka\int_{\R^{d''}}|f_{x'}(\xi'')|^2d\xi''\,dx'\\
&=\|f\|^2_{L^2(\Rd)}<\infty,
\end{align*}
so that $x'\to \calF''f_{x'}(\xi'')$ is in $L^2(\R^{d'})$ for a.e. $\xi''\in \R^{d''}$, hence $\calH^{\textsl{sc}}_{\xi''}$ can be applied. 
Notice also, that for functions with separated variables, $f(x)=f_1(x')f_2(x'')$, $f_1\in L^2(\R^{d'})$, $f_2\in L^2(\R^{d''})$, \eqref{3.3} becomes 
$$
\calG f(k,\xi'')=\calH^{\textsl{sc}}_{\xi''}f_1(k)\calF''f_2(\xi'').
$$
Observe that the structure of $\calG$ expressed in \eqref{3.3} reveals that this transform can be seen as a `twisted' composition of $\calF''$ and 
$\calH^{\textsl{sc}}$.

The definition of the inverse transform $\calG^{-1}$ first requires introducing the space 
\footnote{$\spadesuit$ Here and is some other places we do not distinguish between $\R^{d''}_0$ and $\R^{d''}$.} 
$$ 
L^2(\Gamma):=L^2(\N^{d'}\times \R^{d''},dk\times d\xi''),
$$ 
where  $dk$  is  the counting measure in $\N^{d'}$ and $d\xi''$ is Lebesgue measure in $\R^{d''}$, with norm
$$
\|F\|_{L^2(\Gamma)}=\Big(\sum_{k\in\N^{d'}}\|F(k,\cdot)\|^2_{L^2(\R^{d''})}\Big)^{1/2}=\Big(\sum_{k\in\N^{d'}}\int_{\R^{d''}} |F(k,\xi'')|^2\,d\xi''\Big)^{1/2}.
$$ 
Then $\calG^{-1}$ is defined for $F=F(k,\xi'')\in L^2(\Gamma)$ as a function on $\Rd$ by
\begin{equation}\label{3.4}
\calG^{-1}F(x):=\calF''^{-1}[\calH^{\textsl{sc};-1}_{\xi''}F_{\xi''}(x')](x''), 
\end{equation}
where $F_{\xi''}=F(\cdot,\xi'')$. The correctness of the above definition is a consequence of the fact that for almost every $\xi''\in \R^{d''}_0$, 
$F_{\xi''}\in \ell^2(\N^{d'})$ so that $\calH^{\textsl{sc};-1}_{\xi''}$ can be applied and, moreover, for a.e. $x'\in\R^{d'}$ the function
$\xi''\to \calH^{\textsl{sc};-1}_{\xi''}F_{\xi''}(x')$ is in $L^2(\R^{d''})$ so that $\calF''^{-1}$ can be applied. This is because
$$
\int_{\R^{d''}}\int_{\R^{d'}} |\calH^{\textsl{sc};-1}_{\xi''}F_{\xi''}(x')|^2dx'd\xi''=\int_{\R^{d''}}\Big(\sum_{k\in\N^{d'}}|F_{\xi''}(k)|^2 \Big)d\xi''
=\|F\|^2_{L^2(\Gamma)}<\infty.
$$
Observe that this time $\calG^{-1}$  can be seen as a `twisted' composition of inverses of $\calH^{\textsl{sc}}$ and $\calF''$.

We now prove Plancherel's identity and the inverse formula for the transforms $\calG$ and $\calG^{-1}$.
\begin{theorem} \label{thm:first}
 We have 
\begin{equation}\label{3.5}
\|\calG f\|_{L^2(\Gamma)}=\|f\|_{L^2(\Rd)} \quad and \quad f=\calG^{-1}\big(\calG f\big), \qquad   f\in L^2\big(\Rd),
\end{equation}
and 
\begin{equation}\label{3.6}
\|\calG^{-1} F\|_{L^2(\Rd)}=\|F\|_{L^2(\Gamma)} \quad and \quad F=\calG\big(\calG^{-1}F\big), \qquad F\in L^2(\Gamma).  
\end{equation}
\end{theorem}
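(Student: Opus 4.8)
The plan is to exploit the factored structure of $\calG$ exhibited in \eqref{3.3}: it is the composition of the partial Fourier transform $\calF''$ acting in the $x''$-variable with the fibered scaled Hermite transform $\calH^{\textsl{sc}}$ acting in the $x'$-variable, both unitary (the former as an automorphism of $L^2(\R^{d''})$ applied slicewise, the latter as a unitary isomorphism $L^2(\R^{d'})\to\ell^2(\N^{d'})$ for each fixed $\xi''\in\R^{d''}_0$). Accordingly I would first prove the two Plancherel identities by a Tonelli--Parseval--Plancherel chain, and then the two inversion formulas by composing the fiberwise inverses. Once $\|\calG f\|_{L^2(\Gamma)}=\|f\|_{L^2(\Rd)}$, $\|\calG^{-1}F\|_{L^2(\Rd)}=\|F\|_{L^2(\Gamma)}$ and $\calG^{-1}\calG=\mathrm{id}$ are in hand, the remaining identity $\calG\calG^{-1}=\mathrm{id}$ follows formally (inject the relations into one another and use that $\calG^{-1}$ is isometric, hence injective), but I would verify it directly as well since the computation is symmetric.

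For the first Plancherel identity I would start from the definition of the norm on $L^2(\Gamma)$ and write
\begin{align*}
\|\calG f\|^2_{L^2(\Gamma)}
&=\sum_{k\in\N^{d'}}\int_{\R^{d''}}\big|\calH^{\textsl{sc}}_{\xi''}[\calF''f_{x'}(\xi'')](k)\big|^2\,d\xi''\\
&=\int_{\R^{d''}}\sum_{k\in\N^{d'}}\big|\calH^{\textsl{sc}}_{\xi''}[\calF''f_{x'}(\xi'')](k)\big|^2\,d\xi'',
\end{align*}
the interchange of sum and integral being legitimate by Tonelli, the summands being nonnegative. For each fixed $\xi''\in\R^{d''}_0$ the inner sum equals $\|\calF''f_{x'}(\xi'')\|^2_{L^2(\R^{d'})}$ by Parseval's identity for the orthonormal basis $\{h_{k,|\xi''|}\}_{k}$, i.e.\ the very unitarity of $\calH^{\textsl{sc}}_{\xi''}$ recorded in Section~\ref{sec:prel}. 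The remaining double integral is precisely the one computed in the correctness discussion following \eqref{3.3}, where Fubini together with the Plancherel theorem for $\calF''$ in the $\xi''$-variable reduces it to $\|f\|^2_{L^2(\Rd)}$. The identity $\|\calG^{-1}F\|_{L^2(\Rd)}=\|F\|_{L^2(\Gamma)}$ is obtained the same way, running the computation displayed right after \eqref{3.4} and invoking the unitarity of $\calF''^{-1}$.

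For the inversion formula $f=\calG^{-1}(\calG f)$ I would substitute \eqref{3.3} into \eqref{3.4}. For almost every $\xi''$ the slice $(\calG f)_{\xi''}=\calH^{\textsl{sc}}_{\xi''}[\calF''f_{x'}(\xi'')]$ lies in $\ell^2(\N^{d'})$, so applying $\calH^{\textsl{sc};-1}_{\xi''}$ and using $\calH^{\textsl{sc};-1}_{\xi''}\circ\calH^{\textsl{sc}}_{\xi''}=\mathrm{id}$ on $L^2(\R^{d'})$ returns the function $x'\mapsto\calF''f_{x'}(\xi'')$ for a.e.\ $\xi''$; applying then $\calF''^{-1}$ in the $\xi''$-variable and using $\calF''^{-1}\circ\calF''=\mathrm{id}$ gives back $f(x',x'')$. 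The reverse identity $F=\calG(\calG^{-1}F)$ is proved symmetrically.

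The only genuine subtlety, and hence the point requiring care, is the direct-integral bookkeeping: the fiberwise maps $\calH^{\textsl{sc}}_{\xi''}$ and $\calF''$ are defined only for almost every $\xi''$ (resp.\ $x'$), and each cancellation identity $\calH^{\textsl{sc};-1}_{\xi''}\calH^{\textsl{sc}}_{\xi''}=\mathrm{id}$, $\calF''^{-1}\calF''=\mathrm{id}$ holds only off a null set that a priori depends on the other variable. I would handle this by observing that the correctness arguments already given in Section~\ref{sec:Gru} guarantee that $\calG f$ and $\calG^{-1}F$ are bona fide elements of $L^2(\Gamma)$ and $L^2(\Rd)$ with measurable representatives, so that the compositions above are well-defined $L^2$ maps; the pointwise identities, each valid off a null set, then combine by Fubini into the asserted equalities in the respective $L^2$ spaces.
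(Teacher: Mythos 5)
Your proposal is correct and follows essentially the same route as the paper's proof: a Tonelli--Parseval--Plancherel chain for the two isometry identities, and substitution of \eqref{3.3} into \eqref{3.4} (and vice versa) with the fiberwise cancellations $\calH^{\textsl{sc};-1}_{\xi''}\circ\calH^{\textsl{sc}}_{\xi''}=\mathrm{id}$ and $\calF''^{-1}\circ\calF''=\mathrm{id}$ for the two inversion formulas. Your explicit attention to the almost-everywhere bookkeeping is a welcome addition that the paper handles only implicitly via its correctness discussions after \eqref{3.3} and \eqref{3.4}.
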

\begin{proof}
To verify Plancherel's identity in \eqref{3.5}, for fixed $\xi''\in\R^{d''}_0$ we write
$$
\sum_{k\in\N^{d'}}|\calG(k,\xi'')|^2=\sum_{k\in\N^{d'}}|\calH^{\textsl{sc}}_{\xi''}\big[\calF''f_{x'}(\xi'')\big](k)|^2=\calka|\calF''f_{x'}(\xi'')|^2dx'
$$
and integration over $\xi''$ then gives
$$
\int_{\R^{d''}}\Big(\sum_{k\in\N^{d'}}|\calG(k,\xi'')|^2\Big)d\xi''=\calka\int_{\R^{d''}}|\calF''f_{x'}(\xi'')|^2d\xi''dx'=
\calka\int_{\R^{d''}}|f_{x'}(x'')|^2dx''dx'=\|f\|^2_{L^2(\R^d)}.
$$ 
For the inverse formula in \eqref{3.5} we firstly observe that, by what we just proved, $\calG f\in L^2(\Gamma)$ so $\calG^{-1}$ can be applied.  
Secondly, to avoid a notational collision we write below $(y',\cdot)$ rather than expected $(x',x'')$, and to avoid getting lost at some places we 
shall prompt to which function $\calH^{\textsl{sc}}_{\xi''}$ or $\calF''$ applies.
Now, recalling that by \eqref{3.3}, $(\calG f)_{\xi''}=\calH^{\textsl{sc}}_{\xi''}\big[\calF''f_{x'}(\xi'')\big]$, using \eqref{3.4} 
gives for almost every $y'\in\R^{d'}$ 
\begin{align*} 
\calG^{-1}\big(\calG f\big)(y',\cdot)=\calF''^{-1}\big[\xi''\to\calH^{\textsl{sc};-1}_{\xi''}(\calG_{\a,\b}^\circ f)_{\xi''} (y')\big]
&=\calF''^{-1}\big[\xi''\to\calH^{\textsl{sc};-1}_{\xi''}\big(\calH^{\textsl{sc}}_{\xi''}[x'\to\calF''f_{x'}(\xi'')]\big)(y')\big]\\
&=\calF''^{-1}\big(\calF'' f_{y'} \big)\\ 
&=f_{y'}.
\end{align*} 

To verify Plancherel's identity in \eqref{3.6}  we write
\begin{align*}
\int_{\R^{d''}}\calka|\calG^{-1}F(x',x'')|^2dx'\,dx''&=\int_{\R^{d''}}\calka \big|\calF''^{-1}\big[\calH^{\textsl{sc};-1}_{\xi''}F_{\xi''} (x')\big](x'')\big|^2\,dx''dx'\\
&=\calka\calka\big|\calH^{\textsl{sc};-1}_{\xi''}F_{\xi''} (x')\big|^2d\xi''dx'\\
& = \|F\|^2_{L^2(\Gamma)}.
\end{align*}
For the inversion formula in \eqref{3.6}, observing first that, by what we have just proved, $\calG^{-1}F\in L^2(\Rd)$, we get 
$$
\calG\big(\calG^{-1}F\big)(k,\xi'')
=\calH^{\textsl{sc}}_{\xi''}\Big[\calF''\big(\calG^{-1} F)_{x'}\big)(\xi'')\Big](k)
=\big\langle  \calF''\big((\calG^{-1} F)_{x'}\big)(\xi''),h_{k,|\xi''|}\big\rangle.
$$
The first term in the last brackets is understood, with $\xi''$ fixed, as a function of $x'$ which is
$$
x'\to \calF''\big((\calG^{-1} F)_{x'}\big)(\xi'')   =\calF''\big(\calF''^{-1}[\xi''\to \calH^{\textsl{sc};-1}_{\xi''} F_{\xi''}(r)]\big)(\xi'')=
\calH^{\textsl{sc};-1}_{\xi''}F_{\xi''}(x')=\sum_{l=0}^\infty F(l,\xi'') h_{l,|\xi''|}(x'),
$$
hence the value of the last brackets, where $x'$ is  also the variable of integration, indeed equals $F(k,\xi'')$.
\end{proof}

It remains to conclude that by Theorem~\ref{thm:first}, $\calG^{-1}$ is indeed the inverse of $\calG$, i.e. $\calG^{-1}\circ\calG={\rm Id}_{L^2(\Rd)}$ and
$\calG\circ\calG^{-1}={\rm Id}_{L^2(\Gamma)}$.

\section{Self-adjoint extensions of $G$} \label{sec:self}  
Consider  the real-valued function 
$$
\Theta\colon \N^{d'}\times \R^{d''}_0\to(0,\infty),\qquad \Theta(k,\xi'')=\lambda_k'|\xi''|,\quad \lambda_k'=2|k|+d',
$$
and the corresponding \textit{multiplication operator} $\mathbb M_\Theta$ on $L^2(\Gamma)$ with `maximal' domain, 
\begin{align*}
\D\,\mathbb M_\Theta&=\big\{F\in L^2(\Gamma)\colon \Theta F\in L^2(\Gamma)\big\},\\
 \mathbb M_\Theta F&=\Theta F, \qquad F\in\D\, \mathbb M_\Theta.
\end{align*}
Then a simple argument shows that $\mathbb M_\Theta$ is self-adjoint and nonnegative, and the spectrum of it coincides with $[0,\infty)$, 
which is the set of essential values of $\Theta$. 

We now transfer $\mathbb M_\Theta$ onto $L^2(\Rd)$ through the unitary isomorphism 
$\calG\colon  L^2(\Rd)\to L^2(\Gamma)$ and its inverse $\calG^{-1}\colon L^2(\Gamma) \to L^2(\Rd)$, defining 
\begin{align*}
\D\,\mathbb G:=&\calG^{-1}\big(\D\,\mathbb M_\Theta  \big)= \{f\in L^2\big(\Rd\big)\colon \Theta\,\calG f\in L^2(\Gamma)\},\\
\mathbb G:=&\calG^{-1}\circ\mathbb M_\Theta\circ\calG.
\end{align*}
It follows that $\mathbb G$ is self-adjoint and nonnegative, and the spectrum of $\mathbb G$ is $[0,\infty)$. It remains to verify that $\mathbb G$ 
indeed extends $G$, which is done below. Implicitly, this self-adjoint realization of $G$ is considered in \cite{MS} where, in order to study spectral 
multipliers, a functional calculus corresponding to that described here (see Section~\ref{sec:heat}) was used.  
\begin{proposition} \label{pro:zero}
 $\mathbb G$ is an extension of $G$.
\end{proposition}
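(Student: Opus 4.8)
The plan is to verify the two conditions that define the phrase ``$\mathbb G$ extends $G$'': first that $C^\infty_c(\Rd)=\D\,G\subseteq\D\,\mathbb G$, and second that $\mathbb G\vp=G\vp$ for every $\vp\in C^\infty_c(\Rd)$. The single ingredient that makes both immediate is the intertwining identity \eqref{3.2}, which says that on $C^\infty_c(\Rd)$ the transform $\calG$ turns the action of $G$ into multiplication by $\Theta$. So the strategy is to read \eqref{3.2} as a pointwise operator identity and then invoke Theorem~\ref{thm:first}.

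For the domain inclusion, I would fix $\vp\in C^\infty_c(\Rd)$ and read \eqref{3.2} as the identity $\Theta\,\calG\vp=\calG(G\vp)$ on $\N^{d'}\times\R^{d''}_0$. Since $G\vp$ is again a compactly supported smooth function, it belongs to $L^2(\Rd)$, so Plancherel's identity from Theorem~\ref{thm:first} gives $\calG(G\vp)\in L^2(\Gamma)$ with $\|\calG(G\vp)\|_{L^2(\Gamma)}=\|G\vp\|_{L^2(\Rd)}<\infty$. Consequently $\Theta\,\calG\vp\in L^2(\Gamma)$, which is precisely the membership condition $\vp\in\D\,\mathbb G$. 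This yields $C^\infty_c(\Rd)\subseteq\D\,\mathbb G$.

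For the equality of actions I would simply unwind the definition $\mathbb G=\calG^{-1}\circ\mathbb M_\Theta\circ\calG$ on the same $\vp$:
$$
\mathbb G\vp=\calG^{-1}\big(\Theta\,\calG\vp\big)=\calG^{-1}\big(\calG(G\vp)\big)=G\vp,
$$
where the middle step is again \eqref{3.2} and the last step is the inversion formula $\calG^{-1}(\calG g)=g$ of \eqref{3.5}, applied to $g=G\vp\in L^2(\Rd)$. Combined with the domain inclusion, this shows $\mathbb G|_{C^\infty_c(\Rd)}=G$, i.e. $\mathbb G$ extends $G$.

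I do not expect any genuine obstacle here: the entire content has been front-loaded into \eqref{3.2}, whose proof rests on the adapted Green's formula (Lemma~\ref{lem:int}) together with Lemma~\ref{lem:second}. The only point requiring a word of care is the legitimacy of reading \eqref{3.2} as the identity $\Theta\,\calG\vp=\calG(G\vp)$ and of applying Plancherel to $G\vp$; both are justified by the observation that $G\vp\in C^\infty_c(\Rd)\subseteq L^2(\Rd)$, so that $\calG(G\vp)$ is a bona fide element of $L^2(\Gamma)$ and the two transform layers compose as required.
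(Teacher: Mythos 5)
Your proof is correct and follows essentially the same route as the paper: both arguments rest on the intertwining identity \eqref{3.2} (itself a consequence of Lemma~\ref{lem:second} and the adapted Green's formula of Lemma~\ref{lem:int}), Plancherel's identity, and the inversion formula from Theorem~\ref{thm:first}. The only cosmetic difference is that the paper re-derives \eqref{3.2} inside the proof via Green's formula, whereas you cite it directly; the content is identical.
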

\begin{proof} 
With the aid of \eqref{3.2} and Theorem~\ref{thm:first} it follows (notice that $G$ maps $C^\infty_c\big(\Rd\big)$ into itself) that $C^\infty_c\big(\Rd\big)\subset \D\,\mathbb G$. It remains to show that  $\mathbb G\vp=G\vp$ for $\vp\in C^\infty_c\big(\Rd\big)$. To check this it suffices to verify that 
\begin{equation*}
\mathbb{M}(\calG \vp)=\calG(G\vp), 
\end{equation*}
which means verifying that for every $k\in\N^{d'}$ and $\xi''\in\R^{d''}_0$, we have
\begin{equation*}
\Theta(k,\xi'')\calG\vp(k,\xi'')=\int_{\R^d} G\vp(x)\psi_{k,\xi''}(x)\,dx.
\end{equation*}
But this holds because
$$  
\Theta(k,\xi'')\int_{\R^d}   \vp(x)\psi_{k,\xi''}(x)\,dx=\int_{\R^d} \vp(x)G\psi_{k,\xi''}(x)\,dx=\int_{\R^d} G\vp(x)\psi_{k,\xi''}(x)\,dx, 
$$ 
where in the last step a version of Green's formula for $G$ was used; see Lemma~\ref{lem:int}.   
\end{proof} 

This extension has an alternative description.  Let us define the Sobolev-type space adapted to $G$,  
$$
W^2_G(\Rd)=\{f\in L^2(\Rd)\colon Gf\in L^2(\Rd)\}. 
$$
Here and later on $Gf$ is understood in the distributional sense, and $Gf\in L^2(\Rd)$ means that 
\begin{equation}\label{4.1}
\exists h\in L^2(\Rd)\quad\forall \vp\in C^\infty_c(\Rd)\quad \langle G\vp, f\rangle=\langle \vp, h\rangle,\qquad {\rm and\,\,\,then}\quad Gf:=h.
\end{equation}

It is easily checked (see Lemma~\ref{lem:Sobo} for a similar result) that equipped with the scalar product
$$
\langle f,g\rangle_{W^2_G(\Rd)}:=\langle f,g\rangle+\langle Gf,Gg\rangle,
$$
$W^2_G(\Rd)$ becomes a Hilbert space and, since $C^\infty_c(\Rd)\subset W^2_G(\Rd)$, $W^2_G(\Rd)$ is dense in $L^2(\mathbb{R}^d)$. 

\begin{proposition} \label{pro:Sob2}
We have $\D\,\mathbb G=W^2_G(\Rd)$ and $\mathbb Gf=Gf$ for $f\in \D\,\mathbb G$.
\end{proposition}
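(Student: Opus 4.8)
The plan is to recognize $W^2_G(\Rd)$ as the domain of the Hilbert-space adjoint $G^*$ of the symmetric, densely defined operator $(G,C^\infty_c(\Rd))$. Comparing \eqref{4.1} with the definition of the adjoint, $f\in W^2_G(\Rd)$ holds precisely when there is $h\in L^2(\Rd)$ with $\langle G\vp,f\rangle=\langle\vp,h\rangle$ for all $\vp\in C^\infty_c(\Rd)$; since $C^\infty_c(\Rd)$ is dense, $h$ is unique, so $\D\,G^*=W^2_G(\Rd)$ and the distributional action $Gf$ coincides with $G^*f$. With this identification the proposition becomes the operator identity $\mathbb{G}=G^*$, which I would establish through two inclusions.

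For $\D\,\mathbb{G}\subseteq W^2_G(\Rd)$ (with matching actions) I would argue directly. Fix $f\in\D\,\mathbb{G}$ and put $h:=\mathbb{G}f=\calG^{-1}(\Theta\,\calG f)$, so $\calG h=\Theta\,\calG f\in L^2(\Gamma)$. Using that $\calG$ is unitary (Theorem~\ref{thm:first}), the intertwining relation $\calG(G\vp)=\Theta\,\calG\vp$ from \eqref{3.2}, and the fact that $\Theta$ is real-valued so it may be moved across the inner product of $L^2(\Gamma)$, one obtains for every $\vp\in C^\infty_c(\Rd)$
$$
\langle G\vp,f\rangle=\langle\calG(G\vp),\calG f\rangle=\langle\Theta\,\calG\vp,\calG f\rangle=\langle\calG\vp,\Theta\,\calG f\rangle=\langle\calG\vp,\calG h\rangle=\langle\vp,h\rangle .
$$
All pairings here are legitimate because both $\Theta\,\calG\vp=\calG(G\vp)$ and $\Theta\,\calG f=\calG h$ lie in $L^2(\Gamma)$. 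This shows $f\in W^2_G(\Rd)$ with $Gf=h=\mathbb{G}f$; equivalently $\mathbb{G}\subseteq G^*$, which also follows abstractly from $G\subseteq\mathbb{G}=\mathbb{G}^*$.

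The reverse inclusion $W^2_G(\Rd)\subseteq\D\,\mathbb{G}$ is the substantive point, and I expect it to be the main obstacle. For $f\in W^2_G(\Rd)$ the same computation yields only $\langle\mathbb{M}_\Theta(\calG\vp),\calG f\rangle=\langle\calG\vp,\calG(Gf)\rangle$ for all $\vp\in C^\infty_c(\Rd)$, where $\calG(Gf)\in L^2(\Gamma)$. Were this identity valid for every $\Phi\in\D\,\mathbb{M}_\Theta$ in place of $\calG\vp$, the self-adjointness of $\mathbb{M}_\Theta$ would at once give $\calG f\in\D\,\mathbb{M}_\Theta$ and $\Theta\,\calG f=\calG(Gf)$, i.e. $f\in\D\,\mathbb{G}$ with $\mathbb{G}f=Gf$. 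The difficulty is that the available test functions $\calG\vp$ form only a dense subspace of $L^2(\Gamma)$ and need not be a core for $\mathbb{M}_\Theta$; equivalently, $C^\infty_c(\Rd)$ need not be a core for $\mathbb{G}$, and the unboundedness of $\Theta$ blocks any purely integral-theoretic shortcut. This gap is closed exactly by the essential self-adjointness of $G$ recorded in the introduction (see \cite{DaM}): the closure $\overline G=G^*$ is then self-adjoint and is the unique self-adjoint extension of $G$, so the self-adjoint extension $\mathbb{G}$ of Proposition~\ref{pro:zero} must coincide with it. Hence $\mathbb{G}=G^*$, which gives $\D\,\mathbb{G}=\D\,G^*=W^2_G(\Rd)$ and $\mathbb{G}f=G^*f=Gf$, completing the proof.
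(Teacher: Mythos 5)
Your argument is correct, and for the inclusion $\D\,\mathbb G\subseteq W^2_G(\Rd)$ it coincides with the paper's: one transfers $\langle G\vp,f\rangle$ to $L^2(\Gamma)$ by the polarized Plancherel identity of Theorem~\ref{thm:first}, applies \eqref{3.2}, and moves the real weight $\Theta$ across the inner product. Where you genuinely diverge is the reverse inclusion $W^2_G(\Rd)\subseteq\D\,\mathbb G$. The paper stays inside its own machinery: from $\langle\calG\vp,\Theta\,\calG f\rangle=\langle\calG\vp,\calG(Gf)\rangle$ for all $\vp\in C^\infty_c(\Rd)$ it concludes $\Theta\,\calG f=\calG(Gf)\in L^2(\Gamma)$, citing only the density of $\calG\big(C^\infty_c(\Rd)\big)$ in $L^2(\Gamma)$. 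You instead observe that \eqref{4.1} identifies $W^2_G(\Rd)$ with $\D\,G^*$ and then invoke the essential self-adjointness of $G$ recorded in the introduction, so that the self-adjoint extension $\mathbb G$ of Proposition~\ref{pro:zero} must coincide with $\overline G=G^*$. Your route is shorter and airtight, but it imports a nontrivial external theorem (\cite{MS}, \cite{DaM}); in particular it cannot serve, as the paper's self-contained computation can, as an independent rederivation of essential self-adjointness through the transform $\calG$ (note the paper is careful to use ESA only where unavoidable, e.g.\ for $\hat{\bbG}=\bbG$, and explicitly avoids it for $\bbG_0=\bbG$). On the other hand, the obstruction you flag in your last paragraph is a real one: since $\Theta\,\calG f$ is a priori only a measurable function, testing against a family that is merely dense in $L^2(\Gamma)$ does not by itself force $\Theta\,\calG f=\calG(Gf)$ --- what is actually needed is that $\calG\big(C^\infty_c(\Rd)\big)$ be a core for $\mathbb M_\Theta$ (equivalently, that $C^\infty_c(\Rd)$ be a core for $\mathbb G$), or else a localization argument on the sets $\{\Theta\le N\}$. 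The paper's one-line density justification is terser than this point deserves, and your explicit identification of the gap together with a correct way of closing it is a legitimate, if less self-contained, alternative.
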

\begin{proof}
For $\supset$ take $f\in L^2(\Rd)$ such  that $Gf\in L^2(\Rd)$. This means that \eqref{4.1} holds with $h=Gf$. Now, the polarized version of Plancherel's identity \eqref{3.5} from Theorem~\ref{thm:first} implies that also
$$
\langle \calG(G\vp), \calG f\rangle_{L^2(\Gamma)} = \langle \calG\vp, \calG(G f)\rangle_{L^2(\Gamma)}.
$$
But, see \eqref{3.2},
$$
\langle \calG(G\vp), \calG f\rangle_{L^2(\Gamma)}=\langle \Theta \calG\vp, \calG f\rangle_{L^2(\Gamma)}= \langle \calG\vp, \Theta\calG f\rangle_{L^2(\Gamma)},
$$
hence $\calG(G f)=\Theta \calG f\in L^2(\Gamma)$, so $f\in\D\,\mathbb{G}$; we used the fact that $\calG\big(C^\infty_c(\Rd)\big)$ is dense in $L^2(\Gamma)$, 
because $\calG$ is isometrical bijection. For $\subset$, let $f\in \D\,\mathbb{G}$, so that $f\in L^2(\Rd)$ and $\Theta \calG f\in L^2(\Gamma)$. 
Set $h:= \calG^{-1} (\Theta \calG f)$, so that $h\in L^2(\Rd)$.  Repeating the previous arguments but in the opposite order it is easily seen  
that $\langle G\vp, f\rangle= \langle \vp, h\rangle$, which means $G f=h$, hence $f\in W^2_G(\Rd)$, and also $\mathbb Gf=Gf$. 
\end{proof} 

A common way of constructing self-adjoint extensions of differential operators is using sesquilinear forms. This requires introducing 
appropriate Sobolev-type spaces. In the considered case of the Grushin differential operator we begin with an apparently natural way,
see the formulas in \eqref{4.2} and \eqref{4.3}. There is, however, a pitfall caused by singularity of the 
function $x\to|x'|$ along the $(d-d')$-dimensional hyperplane $S'=\{x\in\Rd\colon x'=0\}$. A way of omitting this is simply to remove $S'$ from $\Rd$ 
considering the open set $\Rd_*:=\Rd\setminus S'$, and then to consider $G$ with domain $C^\infty_c(\Rd_*)$.
To distinguish from the previous situation let $G_0$ stand for such operator, i.e. $\D\,G_0=C^\infty_{c}(\Rd_*)\subset L^2(\Rd_*)$. We mention that 
the problem of `removable sets' in constructions of self-adjoint extensions has been studied for other operators in the literature, see, e.g., \cite{BP} and \cite{H}.

Since $S'\subset\Rd$ is of Lebesgue measure zero, $L^2(\Rd_*)$ naturally identifies with $L^2(\mathbb{R}^d)$. Also $C^\infty_c(\Rd_*)$ identifies with 
the space of smooth functions on $\Rd$ with compact supports separated from the closed set $S'$, denoted $C^\infty_{c,*}(\Rd)$; 
 we shall use both identifications with no further mention. Since $C^\infty_{c,*}(\Rd)$ is dense in $L^2(\mathbb{R}^d)$, $G_0$ 
is densely defined. Moreover, $G_0\subset G$, hence $G_0$ is symmetric and nonnegative. 

We now define  the Sobolev space $W^1_{G_0}(\Rd_*)$ (of first order) adapted to $G_0$,  
\begin{equation}\label{4.2}
W^1_{G_0}(\Rd_*)=\{f\in L^2(\mathbb{R}^d)\colon \partial_j f\in L^2(\mathbb{R}^d)\,\, {\rm for}\,\,1\le j\le d'\,\, {\rm and}\,\,  
|x'|\partial_j f\in L^2(\mathbb{R}^d)\,\, {\rm for}\,\, d'+1\le j\le d\}.
\end{equation}
Here and below, for $f\in L^2(\mathbb{R}^d)$, $\partial_j f$ are understood in the sense of distributions on $\Rd_*$  and for $d'+1\le j\le d$,
$|x'|\partial_j f$ means the distribution $\partial_j f$ multiplied by the $C^\infty$ function $|x'|$ on $\Rd_*$; notice that 
$|x'|\partial_j f=\partial_j(|x'|f)$ in the sense of distributions on $\Rd_*$. 

Equipped with the scalar product
$$
\langle f,g\rangle_{G_0}:=\langle f,g\rangle+\sum_{j=1}^{d'}\langle \partial_j f,\partial_j g\rangle+
\sum_{j=d'+1}^{d}\langle |x'|\partial_j f,|x'|\partial_j g\rangle,
$$
$W^1_{G_0}(\Rd_*)$ becomes a Hilbert space (the proof is postponed to Section~\ref{sec:app}, see Lemma~\ref{lem:Sobo}) and, 
since $C^\infty_{c,*}(\Rd)\subset W^1_{G_0}(\Rd_*)$, $W^1_{G_0}(\Rd_*)$ is dense in $L^2(\mathbb{R}^d)$. 

In the next step we define the sesquilinear form
\begin{equation}\label{4.3}
\mathfrak{t}_0[f,g]:=\int_{\Rd}\nabla_{G_0} f\cdot \overline{\nabla_{G_0} g},\qquad f,g\in W^1_{G_0}(\Rd_*),
\end{equation}
where $\nabla_{G_0}:=(\partial_1,\ldots,\partial_{d'},|x'|\partial_{d'+1},\ldots,|x'|\partial_{d})$ is the $G_0$-gradient. Then $\mathfrak{t}_0$ 
is densely defined Hermitian nonnegative and closed form (closedness of $\mathfrak{t}_0$ is just the completeness of the norm generated by 
$\langle\cdot ,\cdot \rangle_{G_0}$). Consequently, it follows from the general theory that the operator $\mathbb G_{0}$ defined by
\begin{align*}
\D\,\mathbb G_0&=\{f\in W^1_{G_0}(\Rd_*)\colon\exists u_f\in L^2(\mathbb{R}^d)\quad \forall g\in W^1_{G_0}(\Rd_*)\quad\mathfrak{t}_0[f,g]=\langle u_f,g\rangle\},\\
\mathbb G_0\,f&=u_f,\qquad f\in \D\, \mathbb G_0,
\end{align*}
is self-adjoint and nonnegative on $L^2(\Rd)$, and extends $G$ which we now prove. 
We add that in the sense of distributions on $\Rd_*$, for $f\in L^2(\Rd)$, $G_0f\in L^2(\Rd)$ means that 
\begin{equation}\label{4.4}
\exists h\in L^2(\Rd)\quad\forall \vp\in C^\infty_{c,*}(\Rd)\quad \langle G\vp, f\rangle=\langle \vp, h\rangle,\qquad {\rm and\,\,\,then}\quad G_0f:=h.
\end{equation}

\begin{proposition} \label{pro:ext2}
$\mathbb G_0$ is an extension of $G_0$. Moreover,
\begin{equation}\label{4.5}
\D\,\mathbb G_0 \subset W^2_{G}(\Rd)
\end{equation}
 and $\mathbb G_0f=G_0f$ for $f\in \D\, \mathbb G_0$.
\end{proposition}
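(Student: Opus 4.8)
The plan is to prove the three claims of Proposition~\ref{pro:ext2} in the natural order: first that $\mathbb G_0$ extends $G_0$, then that its domain sits inside $W^2_G(\Rd)$, and finally that $\mathbb G_0$ acts as $G_0$ (equivalently as $G$ in the distributional sense) on its domain. The whole argument rests on integration by parts: for the form $\mathfrak t_0$ defined in~\eqref{4.3}, the quantity $\mathfrak t_0[f,g]$ should, after moving the derivatives off $g$, reproduce $\langle G_0 f, g\rangle$ whenever $f$ is smooth enough. Let me sketch each step.

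\textbf{Step 1 ($\mathbb G_0$ extends $G_0$).} First I would take $f\in C^\infty_{c,*}(\Rd)=\D\,G_0$ and show $f\in\D\,\mathbb G_0$ with $\mathbb G_0 f=G_0 f=Gf$. Since $f$ is smooth with support separated from $S'$, for any $g\in W^1_{G_0}(\Rd_*)$ I expand
$$
\mathfrak t_0[f,g]=\sum_{j=1}^{d'}\langle\partial_j f,\partial_j g\rangle+\sum_{j=d'+1}^{d}\langle|x'|\partial_j f,|x'|\partial_j g\rangle.
$$
In each summand I integrate by parts to transfer the derivative from $g$ onto $f$; because $f$ has compact support away from $S'$ there are no boundary terms, and $|x'|$ is smooth on $\Rd_*$, so $\langle|x'|\partial_j f,|x'|\partial_j g\rangle=-\langle\partial_j(|x'|^2\partial_j f),g\rangle$. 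Summing gives $\mathfrak t_0[f,g]=\langle -\Delta_{x'}f-|x'|^2\Delta_{x''}f,\,g\rangle=\langle Gf,g\rangle$, so $u_f=Gf$ exists and lies in $L^2$, proving $f\in\D\,\mathbb G_0$ and $\mathbb G_0 f=Gf$.

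\textbf{Step 2 (domain inclusion and the action).} For general $f\in\D\,\mathbb G_0$ I have $\mathfrak t_0[f,g]=\langle u_f,g\rangle$ for all $g\in W^1_{G_0}(\Rd_*)$ with $u_f\in L^2(\Rd)$. The goal is to read off that $Gf\in L^2(\Rd)$ in the distributional sense~\eqref{4.1}, with $Gf=u_f=\mathbb G_0 f$. I would test against $\vp\in C^\infty_{c,*}(\Rd)\subset W^1_{G_0}(\Rd_*)$: expanding $\mathfrak t_0[f,\vp]$ and integrating by parts to move both derivatives onto the smooth, compactly-supported $\vp$ (again no boundary terms, $|x'|$ smooth away from $S'$) yields $\mathfrak t_0[f,\vp]=\langle f,G\vp\rangle$. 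Comparing with $\mathfrak t_0[f,\vp]=\langle u_f,\vp\rangle$ gives $\langle f,G\vp\rangle=\langle u_f,\vp\rangle$ for all $\vp\in C^\infty_{c,*}(\Rd)$, i.e.\ $G_0 f=u_f$ in the sense of~\eqref{4.4}. This establishes $\mathbb G_0 f=G_0 f$ at once. To upgrade the distributional identity from $C^\infty_{c,*}(\Rd)$ to all of $C^\infty_c(\Rd)$---which is what~\eqref{4.5}, namely $Gf\in L^2(\Rd)$, requires---I would invoke a density/approximation argument: functions in $C^\infty_c(\Rd)$ are approximated in the relevant ($W^1_{G_0}$ or graph) sense by functions in $C^\infty_{c,*}(\Rd)$, using a smooth cutoff excising a shrinking neighbourhood of $S'$ and the fact that the Grushin form degenerates (the weight $|x'|$ vanishes) precisely on $S'$, so the error contributed near $S'$ is controlled and passes to zero. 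This yields $\langle G\vp,f\rangle=\langle\vp,u_f\rangle$ for every $\vp\in C^\infty_c(\Rd)$, hence $f\in W^2_G(\Rd)$ with $Gf=u_f=\mathbb G_0 f$.

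\textbf{Main obstacle.} The routine part is the integration by parts of Steps~1 and~2. The delicate point is the passage from test functions in $C^\infty_{c,*}(\Rd)$ to those in $C^\infty_c(\Rd)$ in Step~2, i.e.\ showing that the hyperplane $S'$ is genuinely removable for this form. One must verify that a cutoff $\chi_\ve$ vanishing near $S'$ satisfies $\chi_\ve\vp\to\vp$ in a topology strong enough to control $\langle G(\chi_\ve\vp),f\rangle$, and that the commutator terms $[\,G,\chi_\ve\,]\vp$ produced by differentiating the cutoff---carrying factors like $|x'|^2\,\partial\chi_\ve$---tend to zero when paired with $f\in L^2$. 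Because the dangerous second-derivative terms in the $x''$-directions come weighted by $|x'|^2$, which is small exactly where $\chi_\ve$ varies, I expect these commutators to vanish in the limit, so the codimension-$d'$ set $S'$ is removable and~\eqref{4.5} follows; this capacity/weight estimate near $S'$ is where the real work lies and is presumably what the author's appendix lemmas are arranged to supply.
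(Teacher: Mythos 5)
Your Step 1 and the first half of your Step 2 coincide in substance with the paper's proof. For \eqref{4.6} the paper integrates by parts via Gauss' formula \eqref{Gauss} on a bounded $C^1$ domain $\Omega\subset\Rd_*$ containing ${\rm supp}\,\vp$ and separated from $S'$, using that $h|_\Omega\in H^1(\Omega)$ because $|x'|$ is bounded below on $\Omega$; your direct appeal to the definition of the weak derivative on $\Rd_*$ is an equivalent route to the same identity. Likewise, testing $\mathfrak{t}_0[f,\vp]=\langle \vp,u_f\rangle$ against $\vp\in C^\infty_{c,*}(\Rd)$ and invoking \eqref{4.6} to conclude $G_0f=u_f$ in the sense of \eqref{4.4} is exactly what the paper does.

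The gap is in the final step, and you have put your finger on the right place: membership in $W^2_G(\Rd)$ requires $\langle G\vp,f\rangle=\langle\vp,u_f\rangle$ for \emph{all} $\vp\in C^\infty_c(\Rd)$, per \eqref{4.1}, not merely for $\vp$ supported away from $S'$, per \eqref{4.4}. (The paper's own proof passes from ``$G_0f=u_f$'' to ``$f\in W^2_G(\Rd)$'' without comment, so the subtlety is genuine, not invented.) However, your proposed justification does not work as described. Taking $\chi_\ve=\chi_\ve(x')$ vanishing near $\{x'=0\}$, the commutator $G(\chi_\ve\vp)-\chi_\ve G\vp$ equals $-2\nabla_{x'}\chi_\ve\cdot\nabla_{x'}\vp-(\Delta_{x'}\chi_\ve)\vp$: the weighted $x''$-derivatives contribute nothing because $\chi_\ve$ is independent of $x''$, so the vanishing of the weight $|x'|$ on $S'$ --- which is the mechanism you invoke for why the commutator is harmless --- is irrelevant. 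What actually governs the error is the $H^1$-capacity of the origin in the \emph{nondegenerate} variables $x'$: after one integration by parts (legitimate since $\partial_jf\in L^2$ for $j\le d'$) the commutator terms paired with $f$ are controlled by $\|\nabla_{x'}\chi_\ve\|_{L^2}$ times fixed quantities, which tends to $0$ for $d'\ge3$, for $d'=2$ only with a logarithmic cutoff, and for $d'=1$ does not tend to $0$ at all; in that case $S'$ has positive capacity, $W^1_{G_0}(\Rd_*)$ contains functions jumping across $S'$ (e.g.\ ${\rm sign}(x_1)\phi(x_1)\psi(x'')$ with $\phi'(0)=0$), and the cutoff scheme cannot succeed. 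Finally, the appendix results you defer to (Lemmas \ref{lem:int}, \ref{lem:Sobo}, \ref{lem:last} and Proposition \ref{pro:add}) contain no statement of this kind. As written, therefore, your proposal establishes the first claim and the identity $\mathbb G_0f=G_0f$ in the sense of \eqref{4.4}, but leaves the inclusion \eqref{4.5} unproved, and the sketch offered to close it would need to be replaced by a genuine capacity estimate with the dimensional restriction made explicit.
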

\begin{proof}
We first claim that $C^\infty_{c,*}(\Rd)\subset \D\, \mathbb G_0$ and $ \mathbb G_0\vp=G\vp$ for $\vp\in C^\infty_{c,*}(\Rd)$. For this purpose it suffices 
to check that given $\vp\in C^\infty_{c,*}(\Rd)$, for every $h\in W^1_{G_0}(\Rd_*)$ it holds
\begin{equation}\label{4.6}
\sum_{j=1}^{d'}\langle \partial_j\vp,\partial_j h\rangle+\sum_{j=d'+1}^{d}\langle |x'|\partial_j\vp,|x'|\partial_j h\rangle =\langle G\vp,h\rangle.
\end{equation}
Let $\Omega\subset \Rd_*$ be a bounded open subset containing ${\rm supp}\,\vp$, separated from  $S'$ which is the boundary of $\Rd_*$, with $\partial\Omega$ being sufficiently smooth, say $C^1$. Equivalently, we prove \eqref{4.6} with integration over $\Omega$ that replaces $\Rd$. For this it suffices to check that 
$$
-\int_{\Omega}\partial^2_j\vp\,\overline{h}=\int_{\Omega}\partial_j\vp\overline{\partial_j h}\quad {\rm and}\quad 
-\int_{\Omega}|x'|^2\partial^2_j\vp\,\overline{h}=\int_{\Omega}|x'|^2\partial_j\vp\overline{\partial_j h}.
$$
But $h\in  W^1_{G_0}(\Rd_*)$ which implies that $h\in H^1(\Omega)$, where $H^1(\Omega)$ denotes the Euclidean Sobolev space on $\Omega$. 
(We have $\partial_j (h|_\Omega)=(\partial_j h)|_\Omega)$ and since $|x'|\partial_j h\in L^2(\mathbb{R}^d)$, also $\partial_j (h|_\Omega)\in L^2(\Omega)$ 
for $d'+1\le j\le d$.) Thus the two above identities follow from Gauss'  formula \eqref{Gauss}.

To check \eqref{4.5} note that $f\in \D\,\mathbb{G}_0$ means that $f\in W_{G_0}^1(\Rd)$ and there is $u_f\in L^2(\Rd)$ such that for every $h\in W_{G_0}^1(\Rd)$ 
we have $\mathfrak{t}_0[h,f]=\langle h,u_f\rangle$. In particular, taking  $h=\vp\in C^\infty_{c,*}(\Rd)$ gives 
$$
\forall \vp\in C^\infty_{c,*}(\Rd)\qquad\sum_{j=1}^{d'}\langle \partial_j\vp,\partial_j f\rangle+\sum_{j=d'+1}^{d}\langle |x'|\partial_j\vp,|x'|\partial_j f\rangle
=\langle \vp,u_f\rangle.
$$
But the left-hand side of the above identity equals $\langle G\vp,f\rangle$, see \eqref{4.6}, which implies that $G_0f=u_f$, hence  $f\in W_G^2(\Rd)$ and
$\mathbb G_0f=G_0f$.
\end{proof}

Obviously, $\mathbb G_0=\mathbb G$ cannot be a consequence of essential self-adjointness of $G$, but the identity is indeed true.

\begin{proposition} \label{pro:oper2}
We have $\mathbb G_0=\mathbb G$.
\end{proposition}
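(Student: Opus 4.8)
The plan is to obtain the equality from the one-sided operator inclusion $\mathbb G_0\subset\mathbb G$ together with the maximality of self-adjoint operators among symmetric ones, rather than from any uniqueness of extensions of $G$ itself. First I would record the inclusion: by Proposition~\ref{pro:ext2}, every $f\in\D\,\mathbb G_0$ satisfies $f\in W^2_G(\Rd)=\D\,\mathbb G$, the last identity being Proposition~\ref{pro:Sob2}, and moreover $\mathbb G_0 f=G_0 f=Gf$. Since Proposition~\ref{pro:Sob2} also gives $\mathbb G f=Gf$ for every $f\in W^2_G(\Rd)$, the two operators agree on the whole of $\D\,\mathbb G_0$. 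Hence $\mathbb G_0\subset\mathbb G$ as operators; that is, $\mathbb G$ is an extension of $\mathbb G_0$.

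The second ingredient is the standard fact that a self-adjoint operator admits no proper symmetric extension: if $A$ is self-adjoint, $B$ is symmetric, and $A\subset B$, then taking adjoints reverses the inclusion, so $B^*\subset A^*=A$; combining with $B\subset B^*$ and $A\subset B$ yields $B\subset B^*\subset A\subset B$, whence $A=B$. I would apply this with $A=\mathbb G_0$, which is self-adjoint by its construction from the closed Hermitian nonnegative form $\mathfrak t_0$, and $B=\mathbb G$, which is self-adjoint (hence symmetric) as the transfer of the multiplication operator $\mathbb M_\Theta$ through the unitary isomorphism $\calG$. Together with $\mathbb G_0\subset\mathbb G$ from the previous paragraph, this forces $\mathbb G_0=\mathbb G$.

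As the remark preceding the statement already signals, one cannot shortcut this by invoking essential self-adjointness of $G$: the operator $\mathbb G_0$ is manufactured as an extension of $G_0$, whose domain is $C^\infty_{c,*}(\Rd)$ and which is typically far from essentially self-adjoint because of the removed singular hyperplane $S'$, so uniqueness of self-adjoint extensions of $G$ carries no information about $\mathbb G_0$. The entire force of the argument therefore rests on the inclusion $\D\,\mathbb G_0\subset W^2_G(\Rd)$ established in Proposition~\ref{pro:ext2}, which certifies that the form-domain of $\mathfrak t_0$ is already large enough for its associated operator to reach the maximal domain $W^2_G(\Rd)$. The one point demanding care, and the main potential obstacle, is the identification of the \emph{actions} $\mathbb G_0 f=\mathbb G f$ on the common domain, i.e. that the distributional Grushin operator tested against $C^\infty_{c,*}(\Rd)$ coincides with the one tested against $C^\infty_c(\Rd)$; this is precisely what is delivered by the implication $G_0 f=u_f\Rightarrow f\in W^2_G(\Rd)$ in Proposition~\ref{pro:ext2}, after which the maximality argument closes the proof.
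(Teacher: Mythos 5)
Your proposal is correct and follows essentially the same route as the paper: establish the operator inclusion $\mathbb G_0\subset\mathbb G$ from Propositions~\ref{pro:ext2} and \ref{pro:Sob2} (including the identification of the two distributional actions $G_0f=Gf$, which the paper makes explicit via density of $C^\infty_{c,*}(\Rd)$ in $L^2(\Rd)$), and then conclude by the maximality of self-adjoint operators among their symmetric extensions. Your spelled-out adjoint argument and the remark that essential self-adjointness of $G$ is of no use here match the paper's reasoning.
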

\begin{proof}
By Propositions~\ref{pro:ext2} and \ref{pro:Sob2} we know that $\D\,\mathbb G_0 \subset \D\,\mathbb{G}$  and, in addition, by comparing \eqref{4.1} 
and \eqref{4.4} we see that for every $\vp\in C^\infty_{c,*}(\Rd)$ we have $\langle \vp,G_0f\rangle=\langle\vp,Gf\rangle$. Density of $C^\infty_{c,*}(\Rd)$ 
in  $L^2(\Rd)$ then shows that $G_0f=Gf$. Thus we have the inclusion $\mathbb{G}_0\subset \mathbb{G}$. This is enough due to the well-known maximality 
property of self-adjoint operators.
\end{proof}

Another way of using sesquilinear forms is to look at $G$ through the identity
$$
G=\sum_{j=1}^{d'}\partial_j^2+ \sum_{j=1}^{d'} \sum_{m=1}^{d''}x_j^2\partial_{d'+m}^2
$$
and consider, instead of $\nabla_{G_0}$, the gradient (cf. the vector fields $X_j$ and $X_{j,m}$ at the end of Section~\ref{sec:heat})
$$
\nabla_{G}=\big((\partial_j)_{1\le j\le d'}, (x_j'\partial_{d'+m})_{1\le j\le d',\, 1\le m\le d''}\big)
$$ 
(we owe this suggestion to Allessio Martini to whom we are grateful). The accompanying Sobolev-type space is now defined as 
\begin{equation*}
W^1_{G}(\Rd)=\{f\in L^2(\mathbb{R}^d)\colon \partial_j f,\,x_j'\partial_{d'+m} f\in L^2(\mathbb{R}^d)\,\, \,\,  {\rm for}\,\,1\le j\le d',\,\, 1\le m\le d''\},
\end{equation*}
and it is standard to verify that the norm generated by the inner product 
$$
\langle f,g\rangle_{W^1_{G}(\Rd)}:=\langle f,g\rangle+\sum_{j=1}^{d'}\langle \partial_j f,\partial_j g\rangle+
\sum_{j=1}^{d'}\sum_{m=1}^{d''}\langle x_j'\partial_{d'+m} f,x_j'\partial_{d'+m} g\rangle,
$$
is complete in $W^1_{G}(\Rd)$. Therefore, the sesquilinear form 
\begin{equation*}
\mathfrak{t}[f,g]:=\int_{\Rd}\nabla_{G} f\cdot \overline{\nabla_{G} g},\qquad f,g\in W^1_{G}(\Rd),
\end{equation*}
(here `$\cdot$' stands for the inner product in $\R^{d'(1+d'')}$) is Hermitian nonnegative and closed. Consequently, the associated operator, call it 
for a moment $\hat{\bbG}$, is self-adjoint and nonnegative on $L^2(\Rd)$. Verification that $\hat{\bbG}$ extends $G$, $\D(G)=C^\infty_c(\Rd)$, goes 
along the lines of the relevant reasoning for $G_0$. Finally, by essential self-adjointness of $G$, $\hat{\bbG}=\bbG$.

We conclude this section by a discussion of some group invariance and homogeneity properties of $G$ inherited then by $\mathbb G$. Let us begin with rotational invariance. The product of orthogonal groups $O(d')\times O(d'')$ naturally acts on functions on $\Rd$. Namely, given $g=(g',g'')\in O(d')\times O(d'')$ let
$$
T_{g'}f(x)=f(g'^{-1}x',x'')\quad {\rm and}\quad T_{g''}f(x)=f(x',g''^{-1}x''),
$$
and then define $T_g=T_{g'}\circ T_{g''}=T_{g''}\circ T_{g'}$, i.e., 
$$
T_g f(x)=f(g'^{-1} x',g''^{-1} x''), \qquad x\in\R^d.
$$
The structure of $G$ suggests that for $g\in O(d')\times O(d'')$, $T_g$ should commute with $G$. For this it suffices to verify that $T_{g'}$ 
and $T_{g''}$ commute with $G$. But $T_{g'}$ commutes with $\Delta_{x'}$ and (trivially) with $\Delta_{x''}$, so
\begin{align*}
\big(G\circ T_{g'^{-1}}\big)f(x',x'')=G\big(f(T_{g'}x',x'')\big)
&=\Delta_{x'}\big(f(T_{g'}x',x'')\big)+|T_{g'}x'|^2\Delta_{x''}\big(f(T_{g'}x',x'')\big)\\
&=\big(\Delta_{x'}f\big)(T_{g'}x',x'')+|x'|^2\big(\Delta_{x''} f\big)(T_{g'}x',x'')\\
&=\big(T_{g'^{-1}}\circ G\big)f(x',x'').
\end{align*}
Analogous calculation with the aid of $\Delta_{x''}\circ T_{g''}=T_{g''}\circ \Delta_{x''}$ (and trivial commutation of $T_{g'}$ with $\Delta_{x'}$) 
shows $G\circ T_{g''}=T_{g''}\circ G$. 

The second property concerns homogeneity of $G$ with respect to the family of \textit{parabolic dilations} of $\Rd$, $\delta_r(x)=(rx',r^2x'')$, then 
involved in parabolic dilations  of functions on $\Rd$, 
$$
\rho_r f(x)=r^{(d'+2d'')/2}f(rx',r^2x''), \qquad r>0. 
$$
A short calculation shows that $G$ is homogeneous of degree 2 in the sense that 
$$
G\circ \rho_r=r^2 (\rho_r\circ G).
$$
It is convenient to denote $D:=d'+2d''$ and consider $D$ as the \textit{homogeneous dimension} for $G$.

That the two discussed properties are inherited by $\mathbb G$ should  not come as a surprise but certainly it requires a formal proof. From now on we 
consider $T_g$ and $\rho_r$ as operators on $L^2(\Rd)$. Obviously, $T_g$ and $\rho_r$ are unitary automorphisms of $L^2(\Rd)$, and $(T_g)^*=T_g^{-1}=T_{g^{-1}}$, 
$(\rho_r)^*=\rho_r^{-1}=\rho_{1/r}$. It may be also easily checked that $T_g$ and $\rho_r$ map $W^2_{G}(\Rd)$ into itself, the restrictions $\check T_g$ and 
$\check\rho_r$ of $T_g$ and $\rho_r$ to $W^2_{G}(\Rd)$ are unitary automorphisms of $W^2_{G}(\Rd)$, and $(\check T_g)^*=\check T_{g^{-1}}$,  
$(\check\rho_r)^*=\check\rho_{1/r}$. Here the superscript `$*$' refers to the adjoint operator in the Hilbert space $W^2_{G}(\Rd)$.

\begin{proposition}\label{pro:one} 
For every $g\in O(d')\times O(d'')$  and $r>0$ we have 
$$
\mathbb G\circ T_g=T_g\circ \mathbb G \qquad and \qquad \mathbb G\circ \rho_r=(r^2\rho_r)\circ \mathbb G.
$$
\end{proposition}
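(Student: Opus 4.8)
The plan is to reduce both identities to the corresponding commutation relations for the differential operator $G$ on $C^\infty_c(\Rd)$ that were verified just above, and then to invoke the essential self-adjointness of $G$. Throughout I use that $T_g$ and $\rho_r$ are unitary automorphisms of $L^2(\Rd)$, with $(T_g)^{-1}=T_{g^{-1}}$ and $(\rho_r)^{-1}=\rho_{1/r}$, that they map $C^\infty_c(\Rd)$ bijectively onto itself, and that the classical identity $G\circ T_h=T_h\circ G$ holds on $C^\infty_c(\Rd)$ for every $h\in O(d')\times O(d'')$ (in particular for $h=g^{-1}$), as shown factorwise through $T_{g'}$ and $T_{g''}$.

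For the rotational part I would consider the operator $T_g\,\bbG\,T_g^{-1}$. Since $T_g$ is unitary, this is again self-adjoint, with domain $T_g(\D\,\bbG)$. On the dense subspace $C^\infty_c(\Rd)$, which is preserved by $T_g^{-1}=T_{g^{-1}}$, one computes, using $\bbG\vp=G\vp$ for $\vp\in C^\infty_c(\Rd)$ and the classical identity $G\circ T_{g^{-1}}=T_{g^{-1}}\circ G$, that $T_g\,\bbG\,T_g^{-1}\vp=T_g\,G(T_{g^{-1}}\vp)=T_g\,T_{g^{-1}}\,G\vp=G\vp=\bbG\vp$. Hence $T_g\,\bbG\,T_g^{-1}$ and $\bbG$ are two self-adjoint extensions of $G|_{C^\infty_c(\Rd)}$; by essential self-adjointness of $G$ they coincide, which is exactly $\bbG\circ T_g=T_g\circ\bbG$.

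For the dilation part I would argue analogously with $\rho_r\,\bbG\,\rho_r^{-1}$, comparing it now with $r^{-2}\bbG$. Both are self-adjoint: the former because $\rho_r$ is unitary, the latter because $r^{-2}>0$. On $C^\infty_c(\Rd)$, using $\rho_r^{-1}=\rho_{1/r}$, the identity $\bbG\vp=G\vp$, and the homogeneity relation $G\circ\rho_{1/r}=r^{-2}(\rho_{1/r}\circ G)$ (obtained from $G\circ\rho_r=r^2(\rho_r\circ G)$ by replacing $r$ with $1/r$), I obtain $\rho_r\,\bbG\,\rho_r^{-1}\vp=\rho_r\,G(\rho_{1/r}\vp)=r^{-2}\rho_r\rho_{1/r}(G\vp)=r^{-2}G\vp=r^{-2}\bbG\vp$. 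Since scaling an essentially self-adjoint operator by the positive constant $r^{-2}$ leaves it essentially self-adjoint with unique self-adjoint extension $r^{-2}\bbG$, and $\rho_r\,\bbG\,\rho_r^{-1}$ is a self-adjoint extension of $r^{-2}G|_{C^\infty_c(\Rd)}$, the two coincide: $\rho_r\,\bbG\,\rho_r^{-1}=r^{-2}\bbG$, equivalently $\bbG\circ\rho_r=(r^2\rho_r)\circ\bbG$.

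The only genuinely delicate point, and the one I would state carefully, is the invocation of uniqueness: it is precisely the essential self-adjointness of $G$ on $C^\infty_c(\Rd)$ that forces the two self-adjoint extensions to agree, so no separate verification of domain equality is required beyond noting that $C^\infty_c(\Rd)$ is a common core. An entirely equivalent alternative avoids essential self-adjointness and works directly with $W^2_G(\Rd)=\D\,\bbG$: for $f\in W^2_G(\Rd)$ and $\vp\in C^\infty_c(\Rd)$ one moves $T_g$ (resp.\ $\rho_r$) onto the test function via the adjoint, applies the classical commutation (resp.\ homogeneity) to the smooth $\vp$, and then uses the distributional definition \eqref{4.1} of $Gf$; this shows $G(T_gf)=T_g(Gf)$ and $G(\rho_rf)=r^2\rho_r(Gf)$ distributionally, which together with the already-noted invariance of $W^2_G(\Rd)$ under $T_g$ and $\rho_r$ yields the claim through Proposition~\ref{pro:Sob2}.
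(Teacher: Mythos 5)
Your proof is correct, but your primary argument is genuinely different from the paper's. The paper does \emph{not} pass through essential self-adjointness here: it works directly with the description $\D\,\bbG=W^2_G(\Rd)$ from Proposition~\ref{pro:Sob2}, fixes $f\in W^2_G(\Rd)$, tests the distributional identity \eqref{4.1} against $T_{g^{-1}}\vp$, moves $T_{g^{-1}}$ onto the other factor by a change of variables, and applies the classical commutation $G\circ T_{g^{-1}}=T_{g^{-1}}\circ G$ to the smooth test function, concluding that $T_gf\in W^2_G(\Rd)$ with $G(T_gf)=T_g(Gf)$; the dilation case is analogous. That is precisely the ``alternative'' you sketch in your last paragraph, so you have in effect reproduced the paper's proof as a fallback. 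Your main route --- showing that $T_g\,\bbG\,T_g^{-1}$ (resp.\ $\rho_r\,\bbG\,\rho_r^{-1}$) is a self-adjoint extension of $G|_{C^\infty_c(\Rd)}$ (resp.\ of $r^{-2}G|_{C^\infty_c(\Rd)}$) and invoking uniqueness --- is valid and arguably slicker, since the domain identity $T_g(\D\,\bbG)=\D\,\bbG$ falls out of the operator equality for free rather than having to be checked; the paper does use exactly this kind of argument elsewhere (for $\hat\bbG=\bbG$ at the end of Section~\ref{sec:self}). The trade-off is that your route leans on the essential self-adjointness of $G$, a nontrivial fact the paper only cites from \cite{MS} and \cite{DaM}, whereas the paper's argument for this proposition is self-contained, resting only on Proposition~\ref{pro:Sob2} and a change of variables. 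One small point of care in your version: you correctly note that uniqueness of the self-adjoint extension of $r^{-2}G|_{C^\infty_c(\Rd)}$ is needed for the dilation case, and that this follows since multiplying a densely defined symmetric operator by a positive constant commutes with taking closures and adjoints; stating that explicitly, as you do, closes the argument.
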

\begin{proof} The proof relies on Proposition~\ref{pro:Sob2}, which says that $\D\,\mathbb G=W^2_G(\Rd)$ and $\mathbb Gf=Gf$ for $f\in \D\,\mathbb G$. 
For the first identity note that the properties of the restriction of $T_g$ to $W^2_{G}(\Rd)$ imply 
$\D(\mathbb G\circ T_g)=\D\,\mathbb G=\D(T_g\circ \mathbb G)$. Now, fix $f\in \D\,\mathbb G=W^2_G(\Rd)$ so that $\mathbb Gf=Gf$. This means that 
\begin{equation*}
\forall \vp\in C^\infty_c(\Rd)\qquad \langle G(T_{g^{-1}}\vp), f \rangle=\langle T_{g^{-1}}\vp, Gf \rangle,
\end{equation*}
and by a change of variable,
$$
\forall \vp\in C^\infty_c(\Rd)\qquad \langle G\vp, T_{g}f \rangle=\langle \vp,T_{g}( Gf)\rangle.
$$
But this means that $T_g f\in W^2_{G}(\Rd)$ and $\mathbb G(T_g f)=T_g(\mathbb G f)$. The second identity from the statement is proved using analogous arguments.
\end{proof}

\section{Heat kernel for $\mathbb G$} \label{sec:heat}  
The functional calculus, an important ingredient of the spectral theory of self-adjoint operators, for a multiplication operator 
on an $L^2$ space has a simple description. In the specific case of the multiplication operator $\mathbb{M}_{\Theta}$, given a Borel function 
$\Phi\colon[0,\infty)\to\C$ (recall that $\sigma(\mathbb{M}_\Theta)=[0,\infty)$) we set 
$$ 
\Phi(\mathbb{M}_{\Theta}):=M_{\Phi\circ\Theta}.
$$
Then the functional calculus is understood  as the mapping $\Phi\to M_{\Phi\circ\Theta}$ assigning to $\Phi$  the multiplication operator by 
$\Phi\circ\Theta$ on $L^2(\Gamma)$. Consequently, the functional calculus for the self-adjoint operator $\mathbb G$, which
is unitarily equivalent to $\mathbb{M}_{\Theta}$, is the realization of the functional calculus for $\mathbb M_\Theta$ mapped by $\calG$ onto 
$L^2\big(\Rd\big)$. Precisely, this means the mapping 
\begin{equation}\label{5.1}
\Phi\to \Phi(\mathbb G):=\calG^{-1}\circ M_{\Phi\circ\Theta}\circ\calG,
\end{equation}
so that the domain of $\Phi(\mathbb G)$ is 
$$
\D\,\Phi(\mathbb G)=\calG^{-1}(\D\,M_{\Phi\circ\Theta} ) =\{f\in L^2\big(\Rd\big)\colon \big(\Phi\circ\Theta\big)\calG f\in L^2(\Gamma)\}.
$$
Obviously, $\Phi(\mathbb G)$ inherits all the spectral properties of $\Phi(\mathbb M_{\Theta})$. 

Now we apply the above comments to the bounded functions $\Phi_t(u)=e^{-tu}$, $u\ge0$, where $t>0$ is a parameter. The corresponding family 
$\{\exp(-t \mathbb G)\}_{t>0}$ is the one-parameter semigroup of operators bounded on $L^2(\Rd)$, called the heat semigroup for $\mathbb G$. 
We prove that these operators are integral operators; the family of the corresponding kernels $\{p_t\}_{t>0}$ is then called the heat kernel for $\mathbb G$. 

We mention that up to some normalization constants caused by a slightly different normalization of the Grushin operator, our formula \eqref{5.2} 
coincides with that obtained by Garofalo and Tralli in \cite[Theorem~3.4]{GT}.
\footnote{\textcolor[rgb]{1,0,0}{$\heartsuit$} It is helpful to point out that for $x',y'\in \R^{d'}$ and $u>0$ it holds
$$
\coth u\,\big(|x'|^2+|y'|^2-2x'\cdot y'\,{\rm sech\,u}\big)=\frac12\big(|x'+y'|^2\tanh(u/2) +|x'-y'|^2\coth(u/2)\big).
$$} 
\begin{theorem} \label{thm:heat}
For every $t>0$, $\exp(-t\mathbb G)$ is an integral operator with kernel 
\begin{align}
&\qquad \qquad \qquad p_t(x,y)=(2\pi t)^{-D/2}\times\label{5.2}\\
&\int_{\R^{d''}} e^{\textrm{i}\frac 1t (x''-y'') \cdot\xi''}\exp\Big(-\frac{|\xi''|}{4t}\big(|x'+y'|^2\tanh|\xi''| +|x'-y'|^2\coth|\xi''|\big)\Big)
\Big(\frac{|\xi''|}{\sinh 2|\xi''|}\Big)^{d'/2}\,d\xi'',\nonumber
\end{align}
that is, for $f\in L^2(\Rd)$,
\begin{equation}\label{el2}
\exp(-t \mathbb G)f(x)=\int_{\Rd} p_t(x,y)f(y)\,dy.
\end{equation}
\end{theorem}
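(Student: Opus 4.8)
The plan is to read off the kernel from the functional calculus \eqref{5.1} and then sum a Mehler series. By \eqref{5.1} applied to $\Phi_t(u)=e^{-tu}$ one has $\exp(-t\bbG)=\calG^{-1}\circ M_{\Phi_t\circ\Theta}\circ\calG$, where $M_{\Phi_t\circ\Theta}$ is multiplication by $(k,\xi'')\mapsto e^{-t\lambda_k'|\xi''|}$ on $L^2(\Gamma)$. Writing $\calG$ in its integral form \eqref{3.1} and $\calG^{-1}$ through \eqref{3.3}--\eqref{3.4}, and interchanging the Hermite summation with the $x''$- and $y''$-integrations, one formally reads off
$$p_t(x,y)=(2\pi)^{-d''}\int_{\R^{d''}}e^{\textrm{i}(x''-y'')\cdot\xi''}\,\calK_t(x',y';\xi'')\,d\xi'',\qquad \calK_t(x',y';\xi''):=\sum_{k\in\N^{d'}}e^{-t\lambda_k'|\xi''|}h_{k,|\xi''|}(x')h_{k,|\xi''|}(y'),$$
so that $\calK_t$ is nothing but the heat kernel, at `time' $t|\xi''|$, of the scaled Hermite semigroup. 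To keep this rigorous I would first carry out the computation for $f$ in a convenient dense class (e.g. finite combinations of separated-variable Schwartz functions), where the relevant interchanges are justified by the $L^1$/$L^\infty$ bounds \eqref{fiu}, and then pass to arbitrary $f\in L^2(\Rd)$ by density and the $L^2$-boundedness of $\exp(-t\bbG)$; concretely one verifies $\langle\exp(-t\bbG)f,g\rangle=\int_{\Rd}\int_{\Rd}p_t(x,y)f(y)\overline{g(x)}\,dy\,dx$ for $f,g$ in that class.

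The second step evaluates $\calK_t$ in closed form. Using $h_{k,\tau}(x')=\tau^{d'/4}h_k(\tau^{1/2}x')$ with $\tau=|\xi''|$ and setting $s=t|\xi''|$, $u'=|\xi''|^{1/2}x'$, $v'=|\xi''|^{1/2}y'$, one has $\calK_t(x',y';\xi'')=|\xi''|^{d'/2}\sum_k e^{-(2|k|+d')s}h_k(u')h_k(v')$, and the $d'$-dimensional Mehler formula gives for the last sum the value $(2\pi\sinh 2s)^{-d'/2}\exp\!\big(-\tfrac12\coth(2s)(|u'|^2+|v'|^2-2u'\cdot v'\,{\rm sech}\,2s)\big)$. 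The algebraic identity recorded in the footnote marked $\heartsuit$, read with its `$u$' equal to $2s$, rewrites the exponent as $-\tfrac14(|u'+v'|^2\tanh s+|u'-v'|^2\coth s)$. Substituting $s=t|\xi''|$ and restoring the prefactor $|\xi''|^{d'/2}$ yields
$$\calK_t(x',y';\xi'')=\Big(\frac{|\xi''|}{2\pi\sinh(2t|\xi''|)}\Big)^{d'/2}\exp\Big(-\frac{|\xi''|}{4}\big(|x'+y'|^2\tanh(t|\xi''|)+|x'-y'|^2\coth(t|\xi''|)\big)\Big).$$

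Finally I would insert $\calK_t$ into the formula for $p_t$ and substitute $\xi''\mapsto\xi''/t$ (Jacobian $t^{-d''}$). This replaces every $t|\xi''|$ by $|\xi''|$, turns the phase into $e^{\textrm{i}t^{-1}(x''-y'')\cdot\xi''}$, and collects the constant $(2\pi)^{-d''}t^{-d''}(2\pi t)^{-d'/2}=(2\pi t)^{-D/2}$, with $D=d'+2d''$, giving exactly \eqref{5.2}. Absolute convergence of the resulting $\xi''$-integral (hence the fact that $p_t$ is a genuine, and bounded, kernel) follows because the integrand stays bounded as $\xi''\to0$ (using $|\xi''|\coth|\xi''|\to1$ and $(|\xi''|/\sinh 2|\xi''|)^{d'/2}\to 2^{-d'/2}$) while for $|\xi''|\to\infty$ the factor $(|\xi''|/\sinh 2|\xi''|)^{d'/2}$ decays exponentially and dominates the rest. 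The main obstacle is not the Mehler evaluation, which is classical, but the rigorous justification of the very first step — that the spectrally defined operator $\exp(-t\bbG)$ coincides with the integral operator carrying the computed kernel — i.e. legitimizing the interchange of the Hermite summation, the partial Fourier integral, and the $L^2$ pairing; this is exactly where the Hermite bounds \eqref{fiu} and the density/boundedness argument do the work.
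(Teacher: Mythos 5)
Your main line is the same as the paper's: read off the kernel from the functional calculus \eqref{5.1}, justify the interchange of the Hermite sum with the integrals on a dense class of nice $f$ using the bounds \eqref{fiu}, evaluate the scaled Hermite heat kernel $T_{t,x',\xi''}(y')$ by Mehler's formula (rewritten via the $\heartsuit$ identity with $u=2s$), substitute $\xi''\mapsto\xi''/t$, and finish by density. All of the computational steps, including the collection of constants into $(2\pi t)^{-D/2}$, match the paper's \eqref{uuu} and \eqref{5.2}.

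The one genuine gap is the final passage to arbitrary $f\in L^2(\Rd)$. You infer from the absolute convergence of the $\xi''$-integral only that $p_t$ is a \emph{bounded function} of $(x,y)$; since $\Rd$ has infinite measure, this neither makes the integral in \eqref{el2} convergent for a general $f\in L^2(\Rd)$ nor makes the bilinear form $(f,g)\mapsto\iint p_t(x,y)f(y)\overline{g(x)}\,dy\,dx$ continuous on $L^2\times L^2$, which is what your density argument implicitly requires. The missing ingredient is a quantitative statement about the decay of $p_t(x,\cdot)$; the paper isolates exactly this as Proposition~\ref{pro:add}, namely $p_t(x,\cdot)\in L^2(\Rd)$ for every $x$, proved by Plancherel in the $y''$-variable (via the representation \eqref{5.7}) together with the Gaussian decay in $y'$ coming from the $\coth$ term. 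With that in hand the paper concludes pointwise: $T\vp_n(x)\to Tf(x)$ for every $x$ by Cauchy--Schwarz, while $\exp(-t\mathbb G)\vp_n\to\exp(-t\mathbb G)f$ in $L^2$, whence \eqref{el2} holds $x$-a.e. You should either prove the analogue of Proposition~\ref{pro:add} or replace your weak-formulation step by this pointwise argument; as written, the last step of your proof does not close.
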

\begin{proof}
Using \eqref{5.1} we obtain $\exp(-t\mathbb G)=\calG^{-1}\circ \mathbb{M}_{\Phi_t\circ\Theta}\circ\calG$, i.e. for $f\in L^2(\Rd)$,
\begin{equation}\label{5.3}  
\exp(-t\mathbb G)f(x)=\calF''^{-1}\Big[\sum_{k\in\N^{d'}}e^{-t\lambda_k'|\xi''|}\calG f(k,\xi'')h_{k,|\xi''|} (x') \Big] (x'')
\equiv\calF''^{-1}\Big[S_{t,x'}f(\xi'') \Big] (x'').
\end{equation}
Note that by \eqref{3.5}, $\calG f\in L^2(\Gamma)$, hence for almost every $x'$, the function of $\xi''$ inside the square brackets in \eqref{5.3}
is in $L^2(\R^{d''})$ so that $\calF''^{-1}$ can be applied.

Assuming temporarily that $f=\vp\in C^\infty_c(\Rd)$ with ${\rm supp}\, \vp\subset\{y\in\Rd\colon |y'|\le r,\, |y''|\le r\}$ for some $r>0$, we evaluate 
the sum in \eqref{5.3}. Using   the integral representation \eqref{3.1}  of $\calG \vp(k,\xi'')$ gives 
\begin{equation*}%
S_{t,x'}\vp(\xi'')=
(2\pi)^{-d''/2}\int_{\R^{d''}}e^{-\textrm{i}y''\cdot\xi''}\int_{\R^{d'}}\vp(y',y'')T_{t,x',\xi''}(y')\,dy'dy''
\end{equation*}
with
\begin{equation*}
T_{t,x',\xi''}(y')=\sum_{k\in\N^{d'}}e^{-t\lambda_k'|\xi''|}h_{k,|\xi''|}(x') h_{k,|\xi''|}(y').
\end{equation*}
Exchanging summation with integration was possible since for $t,x',\xi''$ fixed, we have
\begin{align*}
&\sum_{n=0}^\infty e^{-t|\xi''|(2n+d')}\sum_{|k|=n} \int_{|y'|\le r} \int_{|y''|\le r} \big|h_{k}(|\xi''|^{1/2}x')h_{k}(|\xi''|^{1/2} y')\big|\,dy'\,dy''\\
&\lesssim \sum_{n=0}^\infty e^{-2t|\xi''| n}\sum_{|k|=n} \big|h_{k}(|\xi''|^{1/2}x')\big|\| h_{k} \|_{L^1(\R^{d'})}
<\infty,
\end{align*}  
and this is enough for our purposes; we used the fact that by \eqref{fiu} the latter sum  grows polynomially in $n\to\infty$.

Now, Mehler's formula for the Hermite polynomials, \cite[p.\,380]{Sz}, adapted to the one-dimensional Hermite functions $\{h_k\}_{k\in\N}$ reads for $0<r<1$, 
$$
\sum_{k=0}^\infty r^kh_k(u)h_k(v)=\frac1{\pi^{1/2}(1-r^2)^{1/2}}\exp\Big(-\frac12\frac{1+r^2}{1-r^2}(u^2+v^2)+\frac{2r}{1-r^2}uv\Big), \quad u,v\in\R.
$$
Then a simple computation shows that the heat kernel for the Hermite operator on $L^2(\R^{d'})$,
\begin{equation*}
G_t(x',y')=\sum_{n=0}^\infty e^{-t(2n+d')}\sum_{|k|=n}h_k(x')h_k(y'),
\end{equation*}
has the closed form (see \cite[(1.4)]{ST2} which is a more symmetric version of the formula in \cite[p.\,453]{ST})
$$
G_t(x',y')=(2\pi\sinh 2t)^{-d'/2}\exp\Big(-\frac14\big(|x'+y'|^2 \tanh t+|x'-y'|^2 \coth t\big)\Big).
$$
Thus, in the scaled setting 
\begin{align}\label{www} %
 T_{t,x',\xi''}(y') 
&\,\,=|\xi''|^{d'/2} \sum_{n=0}^\infty e^{-t|\xi''|(2n+d')}\sum_{|k|=n}h_k(|\xi''|^{1/2}x')h_k(|\xi''|^{1/2}y')\\
&\,\,=|\xi''|^{d'/2} G_{t|\xi''|}(|\xi''|^{1/2}x',|\xi''|^{1/2}y')\nonumber\\
&\,\,=\Big(\frac{|\xi''|}{2\pi\sinh 2t|\xi''|}\Big)^{d'/2}\exp\Big(-\frac{|\xi''|}4\big(|x'+y'|^2 \tanh t|\xi''|+ |x'-y'|^2\coth t|\xi''|\big)\Big).\nonumber
\end{align}
Consequently, still for $\vp\in C^\infty_c(\Rd)$, 
\begin{align*} 
\exp(-t\mathbb G)\vp(x)=\calF''^{-1} S_{t,x'}\vp(x'')
&=(2\pi)^{-d''/2} \int_{\R^{d''}} e^{\textrm{i}x''\cdot\xi''}S_{t,x'}\vp(\xi'')\,d\xi''\\
&=(2\pi)^{-d''}\int_{\R^{d''}}e^{\textrm{i}x''\cdot\xi''}\int_{\R^{d''}}e^{-\textrm{i}y''\cdot\xi''}\int_{\R^{d'}}\vp(y',y'')T_{t,x',\xi''}(y')\,dy'dy''\,d\xi''\\
&=\int_{\R^{d}}p_t(x,y)\vp(y)\,dy,
\end{align*} 
where
\begin{align} 
& p_t(x,y)=(2\pi)^{-(d'+2d'')/2}\int_{\R^{d''}} e^{\textrm{i}(x''-y'')\cdot\xi''}T_{t,x',\xi''}(y')  \,d\xi''=(2\pi)^{-D/2}\times   \label{uuu}\\ 
&\int_{\R^{d''}} e^{\textrm{i}(x''-y'')\cdot\xi''}\exp\Big(-\frac{|\xi''|}4 \big(|x'+y'|^2\tanh t|\xi''| +|x'-y'|^2\coth t|\xi''|\big)\Big)
\Big(\frac{|\xi''|}{\sinh 2t|\xi''|}\Big)^{d'/2}\,d\xi''.\nonumber
\end{align} 
The change of variable, $t\xi''\to\xi''$, then gives the form of $p_t(x,y)$ as in \eqref{5.2}. Exchanging the order of integration was possible since 
for $t>0$ and $x'$ fixed,
$$
\int_{\R^{d''}}\int_{\R^{d''}}\int_{\R^{d'}} |\vp(y',y'')T_{t,x',\xi''}(y')|\,dy'dy''\,d\xi''
\lesssim r^d \int_{\R^{d''}}\Big(\frac{|\xi''|}{\sinh 2t|\xi''|}\Big)^{d'/2} \,d\xi''<\infty.
$$ 

For general $f\in L^2(\Rd)$ we argue as follows. Let $\{\vp_n\}\subset C^\infty_c(\Rd)$ be such that $\vp_n\to f$ in $L^2(\Rd)$, hence also 
$\exp(-t \mathbb G)\vp_n\to \exp(-t \mathbb G)f$ in $L^2(\Rd)$. With $t>0$ fixed, define
$$
Tg(x):=\calka\calka p_t (x,y)g(y)\,dy, \qquad g\in L^2(\Rd), \quad x\in\Rd.
$$
By Proposition~\ref{pro:add} this definition is correct. By what we have already proved, combined with Proposition~\ref{pro:add} and Schwarz` inequality, gives
$$
\exp(-t \mathbb G)\vp_n(x)=T \vp_n(x)\to Tf(x), \qquad n\to\infty,
$$
for every $x\in\Rd$. It follows that $\exp(-t \mathbb G)f(x)=Tf(x)$, $x$-a.e.
\end{proof}

To provide an alternative expression for $p_t(x,y)$ let $J_\a$ stand for the Bessel function of the first kind of order $\a$, see \cite[Section 5]{Leb}. 
Recall that for radial functions the Fourier transform may be replaced by the Hankel transform. More precisely, and in the specific case of $\R^{d''}$, 
if $F(\xi'')=f(|\xi''|)$, then $\calF'' F(x'')=\calH_{(d''-2)/2}f(|x''|)$, where for general $\a>-1$,  $\calH_\a$ denotes the Hankel transform given by
$$
\calH_\a f(r)=\int_0^\infty \frac{J_\a(ru)}{(ru)^\a}f(u)\,u^{2\a+1}du,  \qquad r>0.
$$

Now, letting
$$
\psi_{t,x',y'}(\tau)=\exp\Big(-\frac{\tau }{4t} \big(|x'+y'|^2\tanh \tau+|x'-y'|^2\coth\tau\big)\Big)\Big(\frac{\tau}{\sinh 2\tau}\Big)^{d'/2},\qquad \tau>0,
$$
observe that by \eqref{5.2}
\begin{equation}\label{5.7}
p_t(x,y)= (2\pi)^{d''/2}(2\pi t)^{-D/2} \calF''(\Psi_{t,x',y'})\big(t^{-1}(y''-x'')\big),
\end{equation}
where  $\Psi_{t,x',y'}(\xi'')=\psi_{t,x',y'}(|\xi''|)$ is a radial function on $\R^{d''}$. Hence,  
$$
\calF''(\Psi_{t,x',y'})\big(t^{-1}(y''-x'')\big)= \calH_{(d''-2)/2}\psi_{t,x',y'}\big(|y''-x''|/t\big),
$$
and the following holds.
\begin{corollary}\label{cor:1}
We have
$$
p_t(x,y)= (2\pi)^{d''/2}(2\pi t)^{-D/2}\int_0^\infty \frac{J_{(d''-2)/2}(\tau|y''-x''|/t)}{(\tau|y''-x''|/t)^{(d''-2)/2}}\psi_{t,x',y'}(\tau)\,\tau^{d''-1}d\tau.
$$
\end{corollary}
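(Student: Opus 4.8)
The plan is to read off the claimed identity by inserting the explicit definition of the Hankel transform into the representation \eqref{5.7}, which is already available. Recall that \eqref{5.7} states
\[
p_t(x,y)= (2\pi)^{d''/2}(2\pi t)^{-D/2}\,\calF''(\Psi_{t,x',y'})\big(t^{-1}(y''-x'')\big),
\]
where $\Psi_{t,x',y'}(\xi'')=\psi_{t,x',y'}(|\xi''|)$ is a radial function on $\R^{d''}$. Because $\Psi_{t,x',y'}$ is radial, I would invoke the reduction of the Fourier transform to the Hankel transform of order $(d''-2)/2$: for $F(\xi'')=f(|\xi''|)$ one has $\calF''F(x'')=\calH_{(d''-2)/2}f(|x''|)$, and hence
\[
\calF''(\Psi_{t,x',y'})\big(t^{-1}(y''-x'')\big)=\calH_{(d''-2)/2}\psi_{t,x',y'}\big(|y''-x''|/t\big).
\]

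It then remains to substitute the explicit form of $\calH_\a$. Taking $\a=(d''-2)/2$, so that $2\a+1=d''-1$, and evaluating at $r=|y''-x''|/t$ gives
\[
\calH_{(d''-2)/2}\psi_{t,x',y'}\big(|y''-x''|/t\big)
=\int_0^\infty \frac{J_{(d''-2)/2}(\tau|y''-x''|/t)}{(\tau|y''-x''|/t)^{(d''-2)/2}}\,\psi_{t,x',y'}(\tau)\,\tau^{d''-1}\,d\tau.
\]
Multiplying through by the prefactor $(2\pi)^{d''/2}(2\pi t)^{-D/2}$ yields exactly the asserted expression for $p_t(x,y)$, closing the argument.

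The computation is thus a direct substitution, and I expect no serious obstacle. The only points that would warrant a word of justification are, first, that $\Psi_{t,x',y'}$ is genuinely radial—immediate from the definition of $\psi_{t,x',y'}$, which depends on $\xi''$ only through $|\xi''|$—and, second, that $\psi_{t,x',y'}$ lies in the class of functions for which the radial Fourier transform may legitimately be replaced by the Hankel transform. The latter is the mildest of the two concerns and follows from the decay of the factor $(\tau/\sinh 2\tau)^{d'/2}$ together with the Gaussian-type factor in $\psi_{t,x',y'}$; this is precisely the integrability already exploited to justify convergence of the $\xi''$-integral in the proof of Theorem~\ref{thm:heat}, so the passage through \eqref{5.7} carries over without further work.
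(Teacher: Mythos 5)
Your proposal is correct and follows exactly the route the paper takes: the corollary is read off from \eqref{5.7} by the radial-function reduction of $\calF''$ to $\calH_{(d''-2)/2}$ and direct substitution of the Hankel transform formula with $2\a+1=d''-1$. Your added remarks on radiality and integrability of $\psi_{t,x',y'}$ are sensible but not needed beyond what the paper already establishes.
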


We now comment on elementary properties of the heat kernel. First, due to self-adjointness of $\exp(-t\mathbb G)$, for every $t>0$, $p_t(x,y)$ is 
symmetric in $x,y\in\Rd$, as it is directly seen in \eqref{5.2}. Next, recalling that $T_g$ and $\rho_r$ are considered as bounded operators on $L^2(\Rd)$,  
 and using the identities  from Proposition~\ref{pro:one}, it follows that for every $g\in O(d')\times O(d'')$, 
$r>0$, and $t>0$ it holds 
\begin{equation}\label{5.8}
T_g\circ\exp(-t\mathbb G)=\exp(-t\mathbb G)\circ T_g \quad {\rm and}\quad  \exp(-t\mathbb G)\circ \rho_r=\rho_r\circ \exp(-tr^2\mathbb G).
\end{equation}
These two equalities  are then reflected in the identities for the heat kernel, namely
$$
p_t(x,y)=p_t(gx,gy) \quad {\rm and}\quad p_t(x,y) = r^{-D}p_{t/r^2}\big(\delta_{1/r}x,\delta_{1/r}y\big),
$$
and are, a posteriori, easily seen in \eqref{5.2}.

Coming back to \eqref{5.8}, the first identity is a consequence of the well known commutation property for the functional calculus: if $A$ is 
a self-adjoint operator on a Hilbert space $H$ and $B$ is a bounded operator on $H$ such that $BA\subset AB$, then also $B\Psi(A)\subset \Psi(A)B$, 
for any Borel function $\Psi$ on $\sigma(A)$. In addition, if $\Psi$ is bounded, then $\Psi(A)$ is a bounded operator and the last inclusion becomes the identity. 
The second follows from the following  two-operator version of the above. Namely, in the same setting, if $A_1$ and $A_2$ are self-adjoint operators  
such that $BA_1\subset A_2B$, then also $B\Psi(A_1)\subset \Psi(A_2)B$, for any Borel function $\Psi$ on $\mathbb R$. Again, if $\Psi$ 
is bounded, then the last inclusion becomes the identity. We apply this version to $A_1=r^2\bbG$, $A_2=\bbG$, $B=\rho_r$, and $\Psi(u)=\Psi_t(u)=e^{-tu}$, 
$u>0$, taking into account that $\rho_r\circ(r^2\bbG)=\bbG\circ\rho_r$, which is a modified version of the second identity in Proposition~\ref{pro:one}. 
For more comments on this version of commutation property we refer the reader to \cite[p.\,180]{MSt}.

Heuristically, for any fixed $y\in\Rd$, $p_t(x,y)$ should satisfy the heat equation for the Grushin operator. Below we deliver the exact proof 
(which is more painful than one could expect) of this.
\begin{proposition} \label{pro:heat}
Given $y\in\Rd$ let $v(x,t)=p_t(x,y)$. Then $v$ satisfies the heat equation
\begin{equation}\label{5.9}
(\partial_t+G_x)v(x,t)=0, \qquad x\in\Rd,\quad t>0. 
\end{equation}
Moreover, $p_t(x,y)$ is $C^\infty$ on $\Rd\times\Rd\times(0,\infty)$.
\end{proposition}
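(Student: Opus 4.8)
The plan is to prove \eqref{5.9} by differentiating under the integral sign in the representation \eqref{uuu}: by that formula $p_t(x,y)$ is a constant multiple of $\int_{\R^{d''}}e^{\textrm{i}(x''-y'')\cdot\xi''}\,T_{t,x',\xi''}(y')\,d\xi''$. The verification will then be reduced to a pointwise cancellation inside the $\xi''$-integral, the heart of which is that, for each fixed $\xi''\in\R^{d''}_0$, the function $(x',t)\mapsto T_{t,x',\xi''}(y')$ solves the heat equation for the scaled Hermite operator $H_{|\xi''|}:=-\Delta_{x'}+|\xi''|^2|x'|^2$. Once differentiation under the integral is justified, \eqref{5.9} becomes a one-line identity, and the $C^\infty$ assertion will follow by iterating the same domination argument over derivatives of arbitrary order.

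First I would record that, writing $T:=T_{t,x',\xi''}(y')$ for brevity, one has $\partial_t T=(\Delta_{x'}-|\xi''|^2|x'|^2)T$. This is read off from \eqref{www}, which exhibits $T$ as the rescaling $|\xi''|^{d'/2}G_{t|\xi''|}(|\xi''|^{1/2}x',|\xi''|^{1/2}y')$ of the Hermite heat kernel $G_s$; since $G_s$ satisfies $\partial_s G_s=(\Delta_{x'}-|x'|^2)G_s$ (a standard fact, obtainable by term-by-term differentiation of its defining series, legitimate for $s>0$ because the decay $e^{-s(2n+d')}$ dominates the polynomial growth of the Hermite functions and their derivatives afforded by \eqref{fiu}), the scaled equation for $T$ drops out of the chain rule. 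Granting this and using $\Delta_{x''}e^{\textrm{i}(x''-y'')\cdot\xi''}=-|\xi''|^2e^{\textrm{i}(x''-y'')\cdot\xi''}$, the operator $\partial_t+G_x=\partial_t-\Delta_{x'}-|x'|^2\Delta_{x''}$ applied to the integrand gives
$$
e^{\textrm{i}(x''-y'')\cdot\xi''}\big(\partial_t T-\Delta_{x'}T+|x'|^2|\xi''|^2T\big)=0,
$$
which is exactly the eigenfunction mechanism of Lemma~\ref{lem:second}. Thus the whole $\xi''$-integrand is annihilated, and \eqref{5.9} follows as soon as $\partial_t$ and the second-order operator $G_x$ may be taken inside the integral.

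That justification, together with the smoothness claim, both rest on a dominated-convergence estimate carried out on the closed form \eqref{5.2}. As $|\xi''|\to\infty$ one has $\big(|\xi''|/\sinh 2|\xi''|\big)^{d'/2}\lesssim|\xi''|^{d'/2}e^{-d'|\xi''|}$, while near $\xi''=0$ the apparent singularities are removable, since $|\xi''|\tanh|\xi''|$, $|\xi''|\coth|\xi''|$ and $|\xi''|/\sinh 2|\xi''|$ are smooth functions of $|\xi''|^2$; hence the integrand of \eqref{5.2} is smooth in $\xi''$ and rapidly decaying. Each differentiation in the variables $x,y,t$ produces only factors polynomial in $(\xi'',x',y')$ together with powers of $1/t$, so for $(x,y)$ in a compact set and $t$ in a compact subinterval $[a,b]\subset(0,\infty)$ every such derivative of the integrand is bounded, locally uniformly, by an integrable function of $\xi''$ of the form (polynomial)$\times e^{-c|\xi''|}$. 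This legitimizes differentiation under the integral in \eqref{5.2}, completing the proof of \eqref{5.9}, and, iterating over arbitrary multi-indices in $(x,y,t)$, yields that $p_t(x,y)$ is $C^\infty$ on $\Rd\times\Rd\times(0,\infty)$.

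The main obstacle is precisely this uniform domination: one must control the integrand simultaneously in the two regimes $\xi''\to0$ and $\xi''\to\infty$, and keep the bound locally uniform in $(x,y,t)$ for \emph{every} order of differentiation. This is the reason to work throughout with the closed form \eqref{5.2} rather than with the eigenfunction series $\sum_k e^{-t\lambda_k'|\xi''|}h_{k,|\xi''|}(x')h_{k,|\xi''|}(y')$, which fails to converge absolutely as $\xi''\to0$; correspondingly, it is cleanest to establish the cancellation of the second paragraph at fixed $\xi''\neq0$ before integrating in $\xi''$, so that the delicate $k$-summation issue and the $\xi''$-integrability issue are kept separate.
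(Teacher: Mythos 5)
Your argument is correct, and the two halves of it relate to the paper's proof in different ways. For the heat equation itself you follow essentially the same route as the paper: differentiate under the integral in \eqref{uuu} and observe that the integrand is annihilated pointwise in $\xi''$. The only difference is how the cancellation is produced: the paper differentiates the eigenfunction series \eqref{www} term by term and invokes Lemma~\ref{lem:second}, whereas you read off $\partial_t T_{t,x',\xi''}=(\Delta_{x'}-|\xi''|^2|x'|^2)T_{t,x',\xi''}$ from the closed (Mehler) form via the scaling relation and the Hermite heat equation for $G_s$; since the latter is itself proved by term-by-term differentiation, the two derivations are equivalent in substance. The domination you need to move $\partial_t$ and $\partial_{x_j'}^2$ inside the integral is exactly what the paper isolates as Lemma~\ref{lem:last}, carried out on the closed form for the same reason you give (the series is useless for uniform control as $\xi''\to0$). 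Where you genuinely diverge is the $C^\infty$ statement: the paper deduces it from hypoellipticity of $\partial_t+G_x$ via H\"ormander's bracket condition and then appeals to the symmetry $p_t(x,y)=p_t(y,x)$, while you differentiate \eqref{5.2} under the integral to all orders, using that $|\xi''|\tanh|\xi''|$, $|\xi''|\coth|\xi''|$ and $|\xi''|/\sinh 2|\xi''|$ are even analytic functions of $|\xi''|$ (hence smooth in $\xi''$ near the origin) and exponentially decaying at infinity. Your route is more elementary and more laborious in principle (one must check the domination for every multi-index, though the bounds are always of the form polynomial times $(|\xi''|/\sinh 2|\xi''|)^{d'/2}$), but it has a real payoff: it yields joint smoothness in $(x,y,t)$ directly, whereas the paper's combination of hypoellipticity in $(x,t)$ for fixed $y$ with symmetry strictly speaking only gives separate smoothness in $x$ and in $y$ without a further word. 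So your version of the second half is arguably tighter than the published one.
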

\begin{proof} 
For \eqref{5.9} it is convenient to use \eqref{uuu}, the integral representation of $p_t(x,y)$ (with $x''-y''$ replaced by $y''-x''$, this 
does not make any difference) together with  \eqref{www}, the series representation of $T_{t,x',\xi''}(y')$. The first step toward \eqref{5.9}, 
justification that entering with $\partial_t$ and $G_x$ under the integral sign in \eqref{uuu} is allowed, is technically involved hence it is 
postponed to Lemma~\ref{lem:last}. Assuming this step was made we write
$$ 
\partial_t\Big(e^{-\textrm{i}(x''-y'')\cdot\xi''} T_{t,x',\xi''}(y')\Big)=e^{-\textrm{i}(x''-y'')\cdot\xi''}\sum_{k\in\N^{d'}}(-\lambda_k'|\xi''|)e^{-t\lambda_k'|\xi''|}h_{k,|\xi''|}(x')  h_{k,|\xi''|}(y')
$$
and, with the aid of Lemma~\ref{lem:second},
\begin{align*}
G_x\Big(e^{-\textrm{i}(x''-y'')\cdot\xi''}T_{t,x',\xi''}(y')\Big)
&=e^{\textrm{i}y''\cdot\xi''}\sum_{k\in\N^{d'}} G_x\big(e^{-\textrm{i}x''\cdot\xi''}h_{k,|\xi''|}(x') \big) h_{k,|\xi''|}(y')  \\
&=e^{-\textrm{i}(x''-y'')\cdot\xi''}\sum_{k\in\N^{d'}}\lambda_k'|\xi''|e^{-t\lambda_k'|\xi''|}h_{k,|\xi''|}(x')  h_{k,|\xi''|}(y'),
\end{align*}
so that \eqref{5.9} follows. Term by term differentiation of the relevant series, i.e. exchanging $\partial_t,\partial_{x_j}$, $\partial_{x_j}^2$ with 
the sum representing $T_{t,x',\xi''}(y')$ in \eqref{www} is allowed. This can be read off from the proof of \cite[Proposition~2.5]{ST} (in the setting 
of $\Rd$ in place of $\R^{d'}$). Here the fact, implied by \eqref{fiu}, that the quantity $\sum_{|k|=n}\big|h_k(|\xi''|^{1/2}x')h_k(|\xi''|^{1/2}y')\big|$ 
growth polynomially in $n\to \infty$ for $y'\in\R^{d'}$ fixed and $|x'|\le A$, $A>0$, is crucial.

For the proof of the second part of the proposition we use the fact that the operator $\partial_t+G_x$, the evolutive counterpart of $G$, is hypoelliptic. Indeed, denoting $X_j=\partial_j$ and $X_{j,m}=x_j\partial_{d'+m}$ for $j=1,\ldots,d'$, $m=1,\ldots,d''$, and $X_0=-\partial_t$, we have
$$
-(\partial_t+G_x)=X_0+\sum_1^{d'} X_j^2\,+\,\sum_1^{d'}\sum_1^{d''} X_{j,m}^2,
$$
and H\"ormander's criterion on $\Rd \times (0,\infty)$ is satisfied because 
$\partial_{d'+m}=[X_{d'},X_{d', d'+m}]$ for $m=1,\ldots,d''$. 

Thus, by the general theory it follows that $p_t(x,y)$ is a $C^\infty$ function on $\Rd\times(0,\infty)$ for $y\in\Rd$ fixed, 
and the symmetry of $p_t(x,y)$ in $x,y\in\Rd$ then  shows  $C^\infty$ on $\Rd\times\Rd\times(0,\infty)$.
\end{proof} 


\section{Auxiliary results} \label{sec:app}

As declared earlier, in this section we collect proofs of auxiliary results used earlier. We begin with result applied in the proof of Theorem~\ref{thm:heat} 
to check that \eqref{el2} holds for $f\in L^2(\Rd)$ with $p_t$ given by \eqref{5.2}. 
\begin{proposition} \label{pro:add}
For every $t>0$ and $x\in\Rd$ we have $p_t\big(x,\cdot)\in L^2(\Rd)$.
\end{proposition}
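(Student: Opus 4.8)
The plan is to fix $x\in\Rd$ and $t>0$ and to estimate $\int_{\Rd}|p_t(x,y)|^2\,dy$ by reducing the $y''$-integration to a Plancherel identity and then evaluating the remaining integrals. Starting from the representation \eqref{uuu}, for each fixed $y'\in\R^{d'}$ the map $y''\mapsto p_t(x,(y',y''))$ is, up to the constant $(2\pi)^{-D/2}$ and the unimodular phase $e^{\textrm{i}x''\cdot\xi''}$, the $\R^{d''}$-Fourier transform of the function $\xi''\mapsto T_{t,x',\xi''}(y')$. First I would check that, for $x',y',t$ fixed, this function of $\xi''$ lies in $L^1(\R^{d''})\cap L^2(\R^{d''})$: from the closed form \eqref{www} it tends to a finite positive limit as $|\xi''|\to0$ (here $\tfrac{|\xi''|}{\sinh 2t|\xi''|}\to\tfrac1{2t}$ and $|\xi''|\coth(t|\xi''|)\to\tfrac1t$, while the $\tanh$-term vanishes), whereas as $|\xi''|\to\infty$ it decays like $e^{-c|\xi''|}$ thanks to the exponential growth of $\sinh 2t|\xi''|$; hence it is bounded and exponentially decaying. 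Consequently Plancherel's theorem for $\calF''$ yields, for each $y'$,
$$
\int_{\R^{d''}}|p_t(x,(y',y''))|^2\,dy''=(2\pi)^{-(d'+d'')}\int_{\R^{d''}}|T_{t,x',\xi''}(y')|^2\,d\xi''.
$$

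Next I would integrate this identity over $y'\in\R^{d'}$ and, the integrand being nonnegative, invoke Tonelli's theorem to exchange the order of integration, reducing matters to the finiteness of
$$
\int_{\R^{d''}}\Big(\int_{\R^{d'}}|T_{t,x',\xi''}(y')|^2\,dy'\Big)\,d\xi''.
$$
The inner integral is a Gaussian integral in $y'$: expanding the quadratic form $|x'+y'|^2\tanh(t|\xi''|)+|x'-y'|^2\coth(t|\xi''|)$ and completing the square (using $\tanh(t|\xi''|)\coth(t|\xi''|)=1$) evaluates it in closed form. Equivalently, and more transparently, this is just the Chapman--Kolmogorov identity $\int_{\R^{d'}}G_s(u,w)^2\,dw=G_{2s}(u,u)$ for the scaled Hermite heat kernel, which gives
$$
\int_{\R^{d'}}|T_{t,x',\xi''}(y')|^2\,dy'=T_{2t,x',\xi''}(x')=\Big(\frac{|\xi''|}{2\pi\sinh 4t|\xi''|}\Big)^{d'/2}\exp\big(-|\xi''|\tanh(2t|\xi''|)\,|x'|^2\big).
$$

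Finally I would confirm convergence of the remaining $\xi''$-integral of this last expression. Passing to polar coordinates in $\R^{d''}$, the integrand behaves as $|\xi''|\to0$ like a positive constant times $|\xi''|^{d''-1}$, which is integrable near the origin since $d''\ge1$, and as $|\xi''|\to\infty$ it decays exponentially because $\sinh 4t|\xi''|$ grows exponentially while the factor $e^{-|\xi''|\tanh(2t|\xi''|)|x'|^2}\le1$; the decay persists even when $x'=0$ because $d'\ge1$. Hence the integral is finite and $p_t(x,\cdot)\in L^2(\Rd)$. The bulk of the argument is routine bookkeeping; the only steps requiring genuine care are the justification of the Plancherel--Tonelli reduction (covered by the $L^1\cap L^2$ membership of $T_{t,x',\xi''}(\cdot)$ noted above) and the convergence of the final radial integral, and the latter is immediate from the exponential growth of $\sinh$ together with $d',d''\ge1$.
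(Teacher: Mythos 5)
Your proof is correct and follows essentially the same route as the paper: Plancherel for $\calF''$ in the $y''$-variable, Tonelli, and then a Gaussian integral in $y'$ followed by the convergence of the remaining $\xi''$-integral (which hinges on the exponential growth of $\sinh$ and $d',d''\ge1$, exactly as in the paper). The only difference is cosmetic: where the paper bounds the $y'$-integral by discarding the $\tanh$-term and estimating, you evaluate it exactly via the semigroup identity $\int_{\R^{d'}}|T_{t,x',\xi''}(y')|^2\,dy'=T_{2t,x',\xi''}(x')$, which is a slightly cleaner way to reach the same integrable majorant.
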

\begin{proof}
Using \eqref{5.7} and the notation preceding Corollary~\ref{cor:1} we get 
\begin{align*}
&\qquad (2\pi)^{d}t^{d''}\int_{\Rd}|p_t(x,y)|^2dy\\
&=t^{-d}\int_{\R^{d'}}\int_{\R^{d''}}|\calF''(\Psi_{t,x',y'})\big(t^{-1}(y''-x'')\big)|^2dy''dy'\\
&=\int_{\R^{d'}}\int_{\R^{d''}}|\Psi_{t,x',y'})(\xi'')|^2d\xi''dy'\\
&=\int_{\R^{d''}}\Big(\frac{|\xi''|}{\sinh 2|\xi''|}\Big)^{d'}\int_{\R^{d'}}\exp\Big(-\frac{|\xi''|}2 \big(|x'+y'|^2\tanh |\xi''|+
|x'-y'|^2\coth|\xi''|\big)\Big) dy'd\xi''\\
&\le\int_{\R^{d''}}\Big(\frac{|\xi''|}{\sinh 2|\xi''|}\Big)^{d'}\int_{\R^{d'}}\exp\Big(-\frac{|\xi''|}2|y'|^2\coth|\xi''|\Big) dy'd\xi''\\
&\lesssim \int_{\R^{d'}}\exp\big(-|y'|^2\big) dy'\int_{\R^{d''}}\Big(\frac{|\xi''|}{\sinh|\xi''|}\Big)^{d'/2} (\cosh|\xi''|)^{-3/2} d\xi''<\infty.
\end{align*}
In the last step a change of variable was made and the formula $\sinh 2a=2\sinh a\cosh a$ was used. 
\end{proof}

The second auxiliary result is Green's formula adapted to $G$. Recall that in the (classical) setting of the Laplacian,  for an open bounded 
$\Omega\subset\Rd$, $d\ge1$, with boundary $\partial\Omega$ of class $C^1$ (when $d\ge2$) and for  $f,g\in H^1(\Omega)$, the classical Sobolev 
space on $\Omega$, it holds for $j=1,\ldots,d$ (Gauss' formula)
\begin{equation}\label{Gauss}
-\int_\Omega (\partial_j f)\overline{g}\,dx=\int_\Omega  f\overline{\partial_j g}\,dx-\int_{\partial \Omega}f\overline{g}\nu_j\,d\sigma, 
\end{equation}
and for $\partial\Omega$ of class $C^2$ and  $f,g\in H^2(\Omega)$ it holds (Green's formula)
\begin{equation}\label{Green2} 
\int_\Omega (-\Delta_d f)\overline{g}\,dx=\int_\Omega  f\overline{(-\Delta_d g)}\,dx+\int_{\partial \Omega}\Big(f\overline{\partial_{\nu}g}-\partial_{\nu}f\,\overline{g}  \Big)\,d\sigma. 
\end{equation}
Here $d\sigma$ is the surface measure on $\partial \Omega$, and $\nu=\nu(x)$ is the outward unit normal vector at $x\in \partial \Omega$. See, for instance,  
\cite[Theorem D.9\,p.\,408]{Sch}. 

In fact we needed a version of \eqref{Green2} with much weaker assumptions on the involved functions. Notice that Lemma~\ref{lem:int} shows, in particular, the  symmetry of $G$ on $C^\infty_c(\Rd)$.
\footnote{$\clubsuit$  An argument analogous to that from the proof of Lemma~\ref{lem:int} shows nonnegativity of $G$ on $C^\infty_c(\Rd)$.} 
\begin{lemma} \label{lem:int}
Let $\vp\in C^\infty_c(\Rd)$ and $\psi\in C^\infty(\Rd)$. Then 
\begin{equation}\label{6.3}
\int_{\Rd} G\vp(x)\overline{\psi(x)}\,dx=\int_{\Rd} \vp(x)\overline{G\psi(x)}\,dx.
\end{equation} 
\end{lemma}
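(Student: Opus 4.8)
The plan is to reduce \eqref{6.3} to a finite collection of one-variable integrations by parts, each carried out with Gauss' formula \eqref{Gauss}. First I would fix $R>0$ with ${\rm supp}\,\vp\subset Q:=(-R,R)^d$, so that every integral over $\Rd$ may be replaced by an integral over the bounded open box $Q$, whose boundary $\partial Q$ is piecewise $C^1$. On $\partial Q$ both $\vp$ and all its first-order derivatives vanish, and it is precisely this that will annihilate every boundary term. Writing
\[
G=-\sum_{j=1}^{d'}\partial_j^2-|x'|^2\sum_{m=1}^{d''}\partial_{d'+m}^2,
\]
it suffices, by linearity, to treat each summand separately, that is, to show $\int_Q(-\partial_j^2\vp)\overline\psi=\int_Q\vp\,\overline{(-\partial_j^2\psi)}$ for $1\le j\le d'$, and $\int_Q(-|x'|^2\partial_{d'+m}^2\vp)\overline\psi=\int_Q\vp\,\overline{(-|x'|^2\partial_{d'+m}^2\psi)}$ for $1\le m\le d''$.

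For the first group of terms the coefficient is constant, so the computation is the textbook one: applying \eqref{Gauss} once with $f=\partial_j\vp$, $g=\psi$ moves one derivative, and a second application with $f=\vp$, $g=\partial_j\psi$ moves the other; in both steps the boundary integral carries a factor ($\partial_j\vp$ in the first step, $\vp$ in the second) that vanishes on $\partial Q$. The only genuine point of the proof is the second group. Here I would exploit that $|x'|^2$ is a smooth function independent of the integration variable $x_{d'+m}$, so that $\partial_{d'+m}(|x'|^2\,u)=|x'|^2\,\partial_{d'+m}u$ and $|x'|^2$ may be absorbed into the ``$g$''-slot of \eqref{Gauss}. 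Concretely, applying \eqref{Gauss} with $j=d'+m$, $f=\partial_{d'+m}\vp$ and $g=|x'|^2\psi$ yields $\int_Q(\partial_{d'+m}^2\vp)|x'|^2\overline\psi=-\int_Q(\partial_{d'+m}\vp)|x'|^2\overline{\partial_{d'+m}\psi}$, and a second application with $f=\vp$, $g=|x'|^2\partial_{d'+m}\psi$ turns the right-hand side into $\int_Q\vp\,|x'|^2\overline{\partial_{d'+m}^2\psi}$; since $|x'|^2$ is real this is exactly $\int_Q\vp\,\overline{(-|x'|^2\partial_{d'+m}^2\psi)}$ after restoring the outer minus sign. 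The factors $\partial_{d'+m}\vp$ and $\vp$ again kill the two boundary integrals.

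The regularity hypotheses are just sufficient for \eqref{Gauss}: $\psi\in C^\infty(\Rd)$ forces $|x'|^2\psi$ and $|x'|^2\partial_{d'+m}\psi$ to lie in $H^1(Q)$, while $\vp\in C^\infty_c(\Rd)$ is even better. I do not expect a real obstacle here; the compact support of $\vp$ removes every boundary contribution, and the whole of the ``difficulty'' is the bookkeeping of verifying that $|x'|^2$ commutes with $\partial_{d'+m}$ and is unaffected by complex conjugation. Summing the $d'+d''$ identities yields \eqref{6.3}. Taking $\psi\in C^\infty_c(\Rd)$ in particular recovers the symmetry of $G$ on $C^\infty_c(\Rd)$ noted after the statement.
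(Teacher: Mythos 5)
Your proof is correct and follows essentially the same route as the paper's: integrate by parts in each variable, using that $|x'|^2$ is independent of the $x''$-variables so it rides along as a coefficient, with every boundary term killed by the compact support of $\vp$. The only cosmetic differences are that the paper applies the second-order Green formula \eqref{Green2} once per Laplacian on a product of balls $\overline{B_r'}\times \overline{B_r''}$ via Fubini, whereas you apply the first-order Gauss formula \eqref{Gauss} twice per derivative on a box (whose corners technically violate the $C^1$-boundary hypothesis of \eqref{Gauss} as stated, though replacing the box by a ball, or noting that $\vp$ vanishes near $\partial Q$, disposes of this instantly).
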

\begin{proof}
Assume that ${\rm supp}\, \vp\subset A_r:=\{y\in\Rd\colon |x'|\le r,\, |x''|\le r\}=\overline{B_r'}\times \overline{B_r''}$ so that the integration 
over $\Rd$ can be replaced by integration over $A_r$. Obviously, $\vp_{x''}\in C_c^\infty(\R^{d'})$ with ${\rm supp}\,\vp_{x''}\subset \overline{B_r'}$ 
for $x''\in\R^{d''}$, and analogously for $\vp_{x'}$, $x'\in\R^{d'}$, and similarly for $\psi_{x'}$ and $\psi_{x''}$. Then we have 
\begin{align*}
&\int_{A_r}\big(-\Delta_{x'}-|x'|^2\Delta_{x''}\big)\vp(x)\overline{\psi(x)}\,dx\\
&=\int_{B_{r}''}\int_{B_{r}'}\big(-\Delta_{x'}\vp_{x''}\big)(x')\overline{\psi_{x''}(x')}\,dx'dx''- \int_{B_{r}'}|x'|^2\int_{B_{r}''}
\big(-\Delta_{x''}\vp_{x'}\big)(x'')\overline{\psi_{x'}(x'')}\, dx''dx'\\
&=\int_{B_{r}''}\int_{B_{r}'}\vp_{x''}(x')\big(-\Delta_{x'}\overline{\psi_{x''}\big)(x')}\,dx'dx''- \int_{B_{r'}}|x'|^2\int_{B_{r}''}
\vp_{x'}(x'')\overline{\big(-\Delta_{x''}\psi_{x'}\big)(x'')}\,dx''dx'\\
&=\int_{A_r}\vp(x)\overline{\big(-\Delta_{x'}-|x'|^2\Delta_{x''}\big)\psi(x)}\,dx,
\end{align*}
where Green's formula was used for $\Omega'=B_{r}'\subset \R^{d'}$ and $\Omega''=B_{r}''\subset \R^{d''}$ in relevant places.
\end{proof}

\begin{lemma} \label{lem:Sobo}
The norm  generated by the scalar product $\langle \cdot,\cdot\rangle_{G_0}$ in $W^1_{G_0}(\Rd_*)$ is complete.
\end{lemma}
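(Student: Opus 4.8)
The plan is to show that $W^1_{G_0}(\Rd_*)$ equipped with $\langle\cdot,\cdot\rangle_{G_0}$ is complete by the standard argument: take a Cauchy sequence, extract an $L^2$-limit for the function and each of its relevant derivatives, and then verify that the limits of the derivatives are in fact the correct (distributional) derivatives of the limit function. Concretely, suppose $\{f_n\}\subset W^1_{G_0}(\Rd_*)$ is Cauchy in the $\langle\cdot,\cdot\rangle_{G_0}$-norm. By definition of this inner product, the sequences $\{f_n\}$, $\{\partial_j f_n\}$ for $1\le j\le d'$, and $\{|x'|\partial_j f_n\}$ for $d'+1\le j\le d$ are each Cauchy in $L^2(\R^d)$. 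Since $L^2(\R^d)$ is complete, there exist $f,\,g_j\in L^2(\R^d)$ ($1\le j\le d'$) and $h_j\in L^2(\R^d)$ ($d'+1\le j\le d$) with $f_n\to f$, $\partial_j f_n\to g_j$, and $|x'|\partial_j f_n\to h_j$, all in $L^2(\R^d)$.

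The core of the argument is identifying these limits as the appropriate distributional objects on $\Rd_*$. First I would show $g_j=\partial_j f$ in the sense of distributions on $\Rd_*$ for $1\le j\le d'$: for every test function $\vp\in C^\infty_{c,*}(\R^d)$ (equivalently $C^\infty_c(\Rd_*)$) one has $\langle \partial_j f_n,\vp\rangle=-\langle f_n,\partial_j\vp\rangle$, and passing to the limit using $L^2$-convergence of both $\partial_j f_n\to g_j$ and $f_n\to f$ gives $\langle g_j,\vp\rangle=-\langle f,\partial_j\vp\rangle$, i.e.\ $g_j=\partial_j f$. For the second-block indices $d'+1\le j\le d$, the subtlety is that we control $|x'|\partial_j f_n$ rather than $\partial_j f_n$ directly. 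Here I would use the observation already recorded in the paper, namely that $|x'|\partial_j f=\partial_j(|x'|f)$ in the sense of distributions on $\Rd_*$ (valid because $|x'|$ is a smooth function on the open set $\Rd_*$ where it does not vanish). Testing against $\vp\in C^\infty_{c,*}(\R^d)$ and writing $\langle |x'|\partial_j f_n,\vp\rangle=\langle \partial_j f_n,|x'|\vp\rangle=-\langle f_n,\partial_j(|x'|\vp)\rangle$, and then passing to the limit, identifies $h_j$ as the distribution $|x'|\partial_j f$ on $\Rd_*$. Crucially, $|x'|\vp\in C^\infty_{c,*}(\R^d)$ whenever $\vp\in C^\infty_{c,*}(\R^d)$, since multiplication by the smooth function $|x'|$ preserves smoothness and support away from $S'$; this is exactly why passing to $\Rd_*$ and using $C^\infty_{c,*}$ test functions is the right framework and avoids the singularity of $|x'|$ at $S'$.

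Having identified all limits, we conclude $f\in W^1_{G_0}(\Rd_*)$, since $\partial_j f=g_j\in L^2$ for $1\le j\le d'$ and $|x'|\partial_j f=h_j\in L^2$ for $d'+1\le j\le d$, and moreover $f_n\to f$ in the $\langle\cdot,\cdot\rangle_{G_0}$-norm because each constituent $L^2$-norm converges. This establishes completeness. I expect the main obstacle to be purely the bookkeeping around the singular weight $|x'|$: one must be careful to interpret all derivatives distributionally on $\Rd_*$ rather than on $\Rd$, and to verify that the test-function class $C^\infty_{c,*}(\R^d)$ is stable under multiplication by $|x'|$, so that the integration-by-parts identity linking $|x'|\partial_j f_n$ to $f_n$ is legitimate. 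Once this is set up correctly, the remaining steps are the routine completeness argument for weighted Sobolev spaces.
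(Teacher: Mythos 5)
Your argument is correct and is essentially identical to the paper's own proof of Lemma~\ref{lem:Sobo}: extract $L^2$-limits of $f_n$, $\partial_j f_n$, and $|x'|\partial_j f_n$, then identify them distributionally on $\Rd_*$ by testing against $\vp\in C^\infty_{c,*}(\Rd)$, using in the second block exactly the chain $\langle |x'|\partial_j f_n,\vp\rangle=\langle \partial_j f_n,|x'|\vp\rangle=-\langle f_n,\partial_j(|x'|\vp)\rangle$. Your added remark that $C^\infty_{c,*}(\Rd)$ is stable under multiplication by $|x'|$ is a correct and worthwhile explicit justification of a step the paper leaves implicit.
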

\begin{proof}
Let $\{f_n\}$ be a Cauchy sequence in $W^1_{G_0}(\Rd_*)$. This means that $\{f_n\}$, $\{\partial_jf_n\}$ for $1\le j\le d'$, $\{|x'|\partial_jf_n\}$ 
for $d'+1\le j\le d$, are Cauchy sequences in $L^2(\Rd)$. Let $f_n\to f$, $\partial_jf_n\to g$, and $|x'|\partial_jf_n\to h$ in $L^2(\Rd)$, 
for some $f,g,h\in L^2(\Rd)$ and relevant $j$'s. It follows that for any $\vp\in C^\infty_c(\Rd_*)$, $\langle f_n,\vp\rangle\to \langle f,\vp\rangle$ and 
$\langle \partial_j f_n,\vp\rangle\to \langle g,\vp\rangle$. But 
$$
\langle \partial_j f_n,\vp\rangle=- \langle  f_n,\partial_j\vp\rangle\to - \langle  f,\partial_j\vp\rangle,
$$
hence $\partial_jf=g$ follows. Similarly, $\langle |x'|\partial_j f_n,\vp\rangle\to \langle h,\vp\rangle$ and combining this with
$$
\langle |x'|\partial_j f_n,\vp\rangle=\langle \partial_j f_n,|x'|\vp\rangle=- \langle  f_n,\partial_j(|x'|\vp)\rangle\to -\langle f,\partial_j(|x'|\vp)\rangle,
$$
gives $(|x'|\partial_j)f=h$. This shows that $f\in W^1_{G_0}(\Rd_*)$ and that $\{f_n\}$ converges to $f$ in $W^1_{G_0}(\Rd_*)$.
\end{proof}

Recall that
$$
T_{t,x',\xi''}(y') =\Big(\frac{|\xi''|}{2\pi\sinh 2t|\xi''|}\Big)^{d'/2}\exp\Big(-\frac{|\xi''|}4\big(|x'+y'|^2 \tanh t|\xi''|+ |x'-y'|^2\coth t|\xi''|\big)\Big)
$$
and
$$
p_t(x,y)=\int_{\R^{d''}} e^{\textrm{i}(x''-y'')\cdot\xi''}T_{t,x',\xi''}(y')\,d\xi''.
$$
\begin{lemma} \label{lem:last}
Let  $y=(y',y'')\in\Rd$ be fixed. We have 
\begin{equation}\label{6.4}
\partial_t\int_{\R^{d''}} e^{\textrm{i}(x''-y'')\cdot\xi''}T_{t,x',\xi''}(y')\,d\xi''=\int_{\R^{d''}} e^{\textrm{i}(x''-y'')\cdot\xi''}\partial_tT_{t,x',\xi''}(y')\,d\xi''
\end{equation}
and, for  $j=1,\ldots,d'$, 
\begin{equation}\label{6.5}
\partial_{x_j'}^2\int_{\R^{d''}} e^{\textrm{i}(x''-y'')\cdot\xi''}T_{t,x',\xi''}(y')\,d\xi''=\int_{\R^{d''}} e^{\textrm{i}(x''-y'')\cdot\xi''}\partial_{x_j'}^2T_{t,x',\xi''}(y')\,d\xi''.
\end{equation}
\end{lemma}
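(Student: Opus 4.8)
The plan is to invoke the classical theorem on differentiation under the integral sign: to bring $\partial_t$ (respectively $\partial_{x_j'}^2$) inside $\int_{\R^{d''}}$ it suffices, for each fixed value of the differentiation parameter, to exhibit an $L^1(\R^{d''})$-majorant of the relevant $\xi''$-derivative of the integrand that is uniform for the parameter in a compact neighbourhood of the point in question; differentiability of the integrand in $t$ and in $x'$ for each fixed $\xi''\neq0$ is immediate from the explicit formula for $T_{t,x',\xi''}(y')$. Two features reduce everything to one transparent estimate. First, $|e^{\textrm{i}(x''-y'')\cdot\xi''}|=1$, so the oscillatory factor is irrelevant. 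Second, writing $T_{t,x',\xi''}(y')=C(t,\xi'')\,e^{\Phi(t,x',\xi'';y')}$ with $C(t,\xi'')=\bigl(|\xi''|/(2\pi\sinh 2t|\xi''|)\bigr)^{d'/2}$ and $\Phi\le0$, one has $0<T_{t,x',\xi''}(y')\le C(t,\xi'')$; and since $s/\sinh s\le1$ while $\sinh s$ grows exponentially, $C(t,\xi'')$ is bounded on all of $\R^{d''}$ and decays like $|\xi''|^{d'/2}e^{-c|\xi''|}$ (with $c=d't_0>0$) as $|\xi''|\to\infty$, both uniformly for $t$ in a compact subinterval $[t_0,t_1]\subset(0,\infty)$.

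For \eqref{6.4} I fix $x',y'$ and a compact interval $[t_0,t_1]$ containing $t$. A direct computation gives $\partial_t T_{t,x',\xi''}(y')=b(t,x',\xi'';y')\,T_{t,x',\xi''}(y')$, where $b$ is a linear combination, with coefficients depending only on $x',y'$, of the factors $|\xi''|\coth 2t|\xi''|$, $|\xi''|^2\operatorname{sech}^2 t|\xi''|$ and $|\xi''|^2\operatorname{csch}^2 t|\xi''|$. Using the elementary bounds $s\coth s\le1+s$, $\sup_{s>0}s^2\operatorname{csch}^2 s<\infty$ and $\sup_{s>0}s^2\operatorname{sech}^2 s<\infty$ (applied with $s=t|\xi''|$ or $s=2t|\xi''|$), each factor is $O(1+|\xi''|)$ uniformly for $t\in[t_0,t_1]$, so $\sup_{t\in[t_0,t_1]}|b|\,C(t,\xi'')$ is dominated by a fixed multiple of $(1+|\xi''|)^{\,1+d'/2}e^{-c|\xi''|}$ for large $|\xi''|$ and by a constant otherwise, hence lies in $L^1(\R^{d''})$. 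This majorant validates the interchange in \eqref{6.4}.

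For \eqref{6.5} I fix $t,y'$ and restrict $x'$ to a ball $\{|x'|\le A\}$. As $C(t,\xi'')$ is independent of $x'$, one has $\partial_{x_j'}T=(\partial_{x_j'}\Phi)\,T$ and $\partial_{x_j'}^2 T=\bigl[(\partial_{x_j'}\Phi)^2+\partial_{x_j'}^2\Phi\bigr]T$, where $\partial_{x_j'}\Phi$ is a combination of $|\xi''|\tanh t|\xi''|$ and $|\xi''|\coth t|\xi''|$ with coefficients linear in $x'$, and $\partial_{x_j'}^2\Phi=-\tfrac{|\xi''|}2(\tanh t|\xi''|+\coth t|\xi''|)$. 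The same hyperbolic bounds show that the bracket is $O(1+|\xi''|^2)$ uniformly for $|x'|\le A$, so $\sup_{|x'|\le A}|\partial_{x_j'}^2 T|\le C'\,(1+|\xi''|^2)\,C(t,\xi'')\in L^1(\R^{d''})$, and likewise $\sup_{|x'|\le A}|\partial_{x_j'}T|\in L^1(\R^{d''})$. Differentiating under the integral sign once (with the majorant for $\partial_{x_j'}T$) to obtain $\partial_{x_j'}p_t=\int_{\R^{d''}} e^{\textrm{i}(x''-y'')\cdot\xi''}\partial_{x_j'}T\,d\xi''$, and then a second time (with the majorant for $\partial_{x_j'}^2 T$), yields \eqref{6.5}.

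The step I expect to be the genuine obstacle is the behaviour as $|\xi''|\to0$: the factors $\coth t|\xi''|$ and $\operatorname{csch}^2 t|\xi''|$ individually blow up there, and it is precisely the accompanying powers $|\xi''|$ and $|\xi''|^2$ that render the products bounded, uniformly in the frozen variables. The elementary inequalities above make this compensation explicit, whereas away from the origin the exponential decay of the Mehler prefactor $C(t,\xi'')$ absorbs all polynomial growth with room to spare, so integrability there is never in doubt.
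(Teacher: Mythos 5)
Your proposal is correct and follows essentially the same route as the paper: differentiation under the integral sign justified by exhibiting $L^1(\R^{d''})$ majorants, uniform for the parameter in a compact neighbourhood, obtained by computing $\partial_t T$ and $\partial_{x_j'}T,\partial_{x_j'}^2T$ as (polynomially bounded hyperbolic factors) $\times\, T$, discarding the nonpositive exponent, and letting the Mehler prefactor $\bigl(|\xi''|/\sinh 2t|\xi''|\bigr)^{d'/2}$ absorb the polynomial growth, with the $|\xi''|\to0$ compensation handled by the same elementary hyperbolic inequalities. The paper likewise treats \eqref{6.5} in two stages (first $\partial_{x_j'}$, then $\partial_{x_j'}^2$), so there is no substantive difference to report.
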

\begin{proof}
For \eqref{6.4}, given $x,y\in\Rd$ is fixed, let us  introduce a more convenient notation,
$$
F(t,\xi'')=F_{x',y'}(t,\xi'')=(2\pi)^{d'/2}T_{t,x',\xi''}(y')
$$
and
$$
f(t,\tau)=f_{A,B}(t,\tau)=\Big(\frac{\tau}{\sinh 2t\tau}\Big)^{d'/2}\exp\big(-\tau(A\tanh t\tau+B\coth t\tau)\big), \quad \tau\ge0,
$$
so that, with setting $A:=|x'+y'|^2/4$ and $B:=|x'-y'|^2/4$,  $F(t,\xi'')$ takes the form
$$
F(t,\xi'')=f(t,|\xi''|).
$$
Now, given $t_0\in (0,\infty)$, to check that \eqref{6.4} holds for $t=t_0$ it is sufficient to find  a nonnegative function 
$\Phi(\xi'')=\Phi_{t_0}(\xi'')$ such that $\int_{\R^{d''}}\Phi<\infty$ and $|\partial_tF(t,\xi'')|\le \Phi(\xi'')$ for $t\in(t_0/2,2t_0)$ and $\xi''\in\R^{d''}$. 
But
$$
\partial_t f(t,\tau)=-\tau\Big(\frac{\tau}{\sinh 2t\tau}\Big)^{d'/2}\exp\big(-\tau(A\tanh t\tau+B\coth t\tau)\big)
\Big(d'\coth 2t\tau+ \frac{\tau A}{\cosh^2 t\tau}-\frac{\tau B}{\sinh^2 t\tau}\Big),
$$
and so, neglecting the exponential factor which is $\le1$,
$$
|\partial_t f(t,\tau)|\le \Big(\frac{\tau}{\sinh 2t\tau}\Big)^{d'/2}
\Big(d'\tau\coth 2t\tau + \frac{\tau^2 A}{\cosh^2 t\tau} +\frac{\tau^2 B}{\sinh^2 t\tau}\Big).
$$
Thus, for $t_0>0$ fixed and $t_0/2<t<2t_0$, we have
$$
|\partial_t f(t,\tau)|\le C( \psi_1(t,\tau)+ \psi_2(t,\tau)+\psi_3(t,\tau)),
$$
where $C=C(d',t_0,A,B)>0$ and 
\begin{align*}
\psi_1(t,\tau)&=\Big(\frac{\tau}{\sinh 2t\tau}\Big)^{d'/2+1}\cosh 2t\tau\le \Big(\frac{\tau}{\sinh t_0\tau}\Big)^{d'/2+1}\cosh 4t_0\tau\equiv\phi_1(\tau),\\
\psi_2(t,\tau)&=\Big(\frac{\tau}{\sinh 2t\tau}\Big)^{d'/2}\le \Big(\frac{\tau}{\sinh t_0\tau}\Big)^{d'/2}\equiv\phi_2(\tau),\\
\psi_3(t,\tau)&=\Big(\frac{\tau}{\sinh t\tau}\Big)^{d'/2+2}(\cosh t\tau)^{d'/2}\le \Big(\frac{\tau}{\sinh(t_0\tau/2)}\Big)^{d'/2+2}(\cosh 2t_0\tau)^{d'/2}\equiv\phi_3(\tau).
\end{align*}
It is easily seen that $\Phi(\xi'')=\sum_1^3\phi_j(|\xi''|)$ is integrable on $\R^{d''}$ which finishes the proof of  \eqref{6.4}.

For \eqref{6.5} we first prove its version with $\partial_{x_j'}$ replacing $\partial_{x_j'}^2$. This time we denote 
$$
F(x',\xi'')=F_{x',y'}(t,y')=\exp\Big(-\frac{|\xi''|}4\big(|x'+y'|^2 \tanh t|\xi''|+ |x'-y'|^2\coth t|\xi''|\big)\Big)
$$
and observe that 
\begin{equation}\label{6.6}
\partial_{x_j'}F(x',\xi'')=-\frac{|\xi''|}2 \big((x_j'+y_j')\tanh t|\xi''| + (x_j'-y_j')\coth t|\xi''|\big)F(x',\xi'').
\end{equation}
Thus, for $t\in(0,\infty)$, $y'\in \R^{d'}$ and $x_{j,0}'\in\R$ fixed, neglecting the exponential factor we have for $|x_{j}'-x_{j,0}'|<1$
\begin{align*}
|\partial_{x_j}T_{t,x',\xi''}(y')|&\le \Big(\frac{|\xi''|}{2\pi\sinh 2t|\xi''|}\Big)^{d'/2}|\partial_{x_j} F(x',\xi'') |\\
&\lesssim |\xi''|\Big(\frac{|\xi''|}{\sinh 2t|\xi''|}\Big)^{d'/2}\big(\tanh t|\xi''| + \coth t|\xi''|\big)\\
&\lesssim |\xi''|\Big(\frac{|\xi''|}{\sinh 2t|\xi''|}\Big)^{d'/2}+\Big(\frac{|\xi''|}{\sinh t|\xi''|}\Big)^{d'/2+1}(\cosh  t|\xi''|)^{-d'/2+1},
\end{align*}
where we used $|x_j'\pm y_j'|\le |x_{j,0}'|+|y_j|+1$ for $|x_{j}'-x_{j,0}'|<1$. It is clear that the both functions in the last bound are  integrable on $\R^{d''}$. 

We now pass to \eqref{6.5}. We use \eqref{6.6} to find
$$
\partial_{x_j'}^2F(x',\xi'')=-\frac{|\xi''|}2\Big(\big((x_j'+y_j')\tanh t|\xi''|+(x_j'-y_j')\coth t|\xi''|\big)^2+\tanh t|\xi''|+\coth t|\xi''|\Big)F(x',\xi'').
$$
Similarly to the previous calculations and for $t$, $y'$ and $x_{j,0}'$ as before, it is easy to find $\Phi\in L^1(\R^{d''})$ such that
$$
|\partial_{x_j'}^2T_{t,x',\xi''}(y')|\le \Phi(\xi''),\qquad \xi''\in \R^{d''},\quad |x_{j}'-x_{j,0}'|<1.
$$
This finishes the proof of \eqref{6.5} and thus the lemma.
\end{proof}

We close this section with comment on a possibly alternative definition of $\calG$. Namely, in the computation of $\calG\vp$ for $\vp\in C^\infty_c(\Rd)$ in  Section~\ref{sec:Gru}, we can change the order of integration to end up with $\calG\vp(k,\xi'')=\calF''\big[\calH^{\textsl{sc}}_{\xi''}\vp_{x''}(k)\big](\xi'')$. This suggests an alternative for \eqref{3.3}, namely
\begin{equation*}
\hat{\calG} f(k,\xi''):=\calF'' \big[\calH^{\textsl{sc}}_{\xi''}f_{x''}(k)\big](\xi''),  \qquad f\in L^2(\Rd).
\end{equation*}
With appropriately defined $\hat{\calG}^{-1}$, a mapping from $L^2(\Gamma)$ to the space of functions on $\Rd$, the analogue of Theorem~\ref{thm:first} 
for $\hat{\calG}$ holds true. But $\calG$ and $\hat{\calG}$ coincide on $C^\infty_c(\Rd)$, hence $\calG=\hat{\calG}$, and also $\calG^{-1}=\hat{\calG}^{-1}$. 

\textbf{Acknowledgements}. The author would like to thank Professors Jacek Dziuba\'nski and Alessio Martini for their valuable comments and remarks. 


\end{document}